\documentclass[journal,twoside,web]{ieeecolor}
\usepackage{generic}
\usepackage{cite}
\usepackage{amsmath,amssymb,amsfonts}
\usepackage{algorithmic}
\usepackage{graphicx}
\usepackage{textcomp}
\usepackage{hyperref}
\usepackage{mathrsfs}
\usepackage{comment}
\usepackage{cancel,ulem,booktabs,multirow}
\usepackage{mathtools}
\mathtoolsset{showonlyrefs=true}

\usepackage[ruled,vlined]{algorithm2e}

\usepackage{amsthm}

\newcommand{\Prob}{\mathbb{P}}
\newcommand{\R}{\mathbb{R}}

\newcommand{\F}{\mathscr{F}}
\newcommand{\A}{\mathscr{A}}
\newcommand{\E}{\mathbb{E}}

\newcommand{\ud}{\,\mathrm{d}}
\newcommand{\QV}[2]{\left\langle #1, #2\right\rangle}

\newcommand{\BAR}[1]{\overline{#1}}

\newcommand{\REG}{\mathrm{reg}}
\newcommand{\red}{\mathrm{red}}
\newcommand{\transpose}{^{\operatorname{T}}}

\newtheorem{theorem}{Theorem}
\newtheorem{lemma}{Lemma}
\newtheorem{prop}{Proposition}

\newtheorem{remark}{Remark}
\newtheorem{assu}{Assumption}
\newtheorem{cor}{Corollary}

\def\BibTeX{{\rm B\kern-.05em{\sc i\kern-.025em b}\kern-.08em
    T\kern-.1667em\lower.7ex\hbox{E}\kern-.125emX}}
\markboth{\journalname, VOL. XX, NO. XX, XXXX 2025}
{Haosheng Zhou \MakeLowercase{\textit{et al.}}: Adversarial Decision-Making in Partially Observable Multi-Agent Systems}

\begin{document}

\title{Adversarial Decision-Making in Partially Observable Multi-Agent Systems: A Sequential Hypothesis Testing Approach}

\author{Haosheng Zhou, Daniel Ralston, Xu Yang and Ruimeng Hu
\thanks{This work was partially supported by the ONR grant under \#N00014-24-1-2432, the Simons Foundation (MP-TSM-00002783), and the NSF grant DMS-2109116 and DMS-2420988.}
\thanks{H. Zhou is with the Department of Statistics and Applied Probability, University of California, Santa Barbara, CA 93106, USA
        {\tt\small hzhou593@ucsb.edu}}%
\thanks{D. Ralston is with the Department of Mathematics, University of California, Santa Barbara, CA 93106, USA
        {\tt\small danielralston@ucsb.edu}}%
\thanks{X. Yang is with the Department of Mathematics, University of California, Santa Barbara, CA 93106, USA
        {\tt\small xy6@ucsb.edu}}%
\thanks{R. Hu is with the Department of Mathematics, and Department of Statistics and Applied Probability, University of California, Santa Barbara, CA 93106, USA
        {\tt\small rhu@ucsb.edu}}%
}

\maketitle

\begin{abstract}
Adversarial decision-making in partially observable multi-agent systems requires sophisticated strategies for both deception and counter-deception. This paper presents a sequential hypothesis testing (SHT)-driven framework that captures the interplay between strategic misdirection and inference in adversarial environments. We formulate this interaction as a partially observable Stackelberg game, where a follower agent (blue team) seeks to fulfill its primary task while actively misleading an adversarial leader (red team). In opposition, the red team, leveraging leaked information, instills carefully designed patterns to manipulate the blue team's behavior, mitigating the misdirection effect. Unlike conventional approaches that focus on robust control under adversarial uncertainty, our framework explicitly models deception as a dynamic optimization problem, where both agents strategically adapt their policies in response to inference and counter-inference. We derive a semi-explicit optimal control solution for the blue team within a linear-quadratic setting and develop iterative and machine learning-based methods to characterize the red team’s optimal response. Numerical experiments demonstrate how deception-driven strategies influence adversarial interactions and reveal the impact of leaked information in shaping equilibrium behaviors. These results provide new insights into strategic deception in multi-agent systems, with potential applications in cybersecurity, autonomous decision-making, and financial markets.
\end{abstract}

\begin{IEEEkeywords}
Adversarial decision-making, partially observable games, sequential hypothesis testing, Stackelberg games, stochastic optimal control, strategic deception and counter-deception.
\end{IEEEkeywords}

\section{Introduction}\label{sec:I}
Deception is a fundamental aspect of strategic interactions, shaping decision-making in adversarial settings across domains such as cybersecurity~\cite{aggarwal2016cyber}, financial markets~\cite{gerschlager2005deception}, and autonomous systems~\cite{arkin2011moral}. In these domains, adversarial decision-making plays a crucial role, with opposing agents employing strategies to outmaneuver and mislead one another (e.g., \cite{yager2008knowledge}). While the importance of deception has been recognized since ancient times, famously emphasized in Sun Zi’s \textit{The Art of War}~\cite{zi2007art}, modern applications involve adversarial agents that both infer their opponents’ intentions and mislead them in return. This paper examines such adversarial interactions through a red--blue team setting (e.g., \cite{rajendran2011blue}), with a particular focus on the interplay between sequential hypothesis testing and stochastic control.

In this work, we study adversarial decision-making in a partially observable environment where one agent (the red team) seeks to infer the objectives of its opponent (the blue team), while the blue team actively misdirects this inference process. 
Throughout the paper, the terms “red team” and “blue team” refer to two individual decision makers, with extensions to networked or multi-agent teams briefly discussed in the Conclusion.
Unlike conventional robust control frameworks that passively defend against adversarial influence, our approach explicitly models active deception, where the blue team deliberately introduces perturbations to shape the red team’s inference process. Anticipating this, the red team counters by strategically manipulating the blue team’s belief formation through leaked control information. This interplay results in a dynamic game of deception and counter-deception, where both teams exploit information asymmetry and adapt their strategies to outmaneuver one another.

Firstly, to formalize deception, we model such interaction using sequential hypothesis testing (SHT), a statistical method for dynamically evaluating hypotheses as data becomes available~\cite{tartakovsky2014sequential,goodman2007adaptive,schonbrodt2017sequential}. By integrating SHT into a linear-quadratic control framework, we capture the trade-offs between achieving primary objectives and engaging in strategic deception.
This allows the blue team to balance deception with fulfilling its primary task, while responding to the red team’s adaptive inference mechanisms.
Unlike filtering-based approaches~\cite{bain2009fundamentals,davis1977linear}, our model avoids explicit state estimation while still effectively capturing the complexities of partially observable adversarial interactions.
Furthermore, it differs from conventional robust control frameworks, which primarily focus on passive resilience rather than proactive misdirection~\cite{bauso2016robust,hakobyan2024wasserstein,moon2016linear,taskesen2024distributionally}. To the best of our knowledge, this is the first work that directly embeds test statistics into the cost functionals of control problems, explicitly modeling deception as a strategic component of decision-making.

Secondly, we formulate adversarial interaction as a leader-follower Stackelberg game, where the red team takes the leader's role and the blue team acts as the follower. Aware of the presence of excessive noise introduced by the blue team, the red team strategically counters by selecting and embedding false alternative beliefs about the misdirection pattern into the blue team’s decision-making process, aiming to subtly guide the blue team’s actions toward unconsciously revealing its true objectives. 
This formulation captures the interplay between deception and counter-deception, highlighting how adversarial agents leverage asymmetries in information and strategic adaptation to influence decision-making.

As a third contribution, we derive a semi-explicit solution for the blue team’s control problem and develop iterative and machine learning-based methods to optimize the red team’s strategy. Numerical experiments validate the proposed framework, demonstrating results consistent with theoretical intuition and highlighting the effectiveness of deception-driven strategies in adversarial interactions.

To place our work in the broader context of adversarial decision-making, we draw connections to existing frameworks while highlighting key distinctions. Adversarial interactions have also been widely modeled using partially observable stochastic games (POSGs)\cite{horak2019solving,ma2024sub,liu2022sample}. A common approach to solving POSGs involves partially observable Markov decision processes (POMDPs) or belief-space planning, in which agents maintain and update probability distributions over hidden states\cite{kurniawati2009sarsop,roy2005finding,kim2019pomhdp}. However, these methods are often computationally intensive due to the need for real-time belief updates, making them impractical for many adversarial applications. Additionally, POSGs typically assume a zero-sum structure between agents and focus heavily on numerical methods, often lacking rigorous mathematical guarantees. Another approach is the antagonistic control framework~\cite{lipp2016antagonistic}, which takes the adversary's perspective and considers attacking a control system by maximizing the cost. However, this framework is mostly for deterministic environments and requires convex constraints on the state-action space to ensure well-posedness. 
Given these challenges, we take an alternative approach by employing SHT, which offers a more direct and computationally efficient framework for modeling strategic deception and inference in adversarial settings.
Related likelihood ratio-based detection approaches in adversarial environments have also been explored in  \cite{das2024almost} for the security of cyber–physical systems.

The rest of the paper is structured as follows. Section~\ref{sec:model} presents the linear-quadratic model incorporating SHT to characterize the blue team's strategic misdirection. Section~\ref{sec:game} formulates the Stackelberg game framework and details the red team’s counter-deception strategy. Section~\ref{sec:results} provides numerical results illustrating the dynamics of the proposed game. Finally, Section~\ref{sec:discussion} discusses broader implications and potential directions for future research.

\section{The Deception Model}\label{sec:model}

This section presents a linear-quadratic stochastic control framework that captures the trade-off between achieving the blue team's primary objective and concealing its true intentions from an adversarial red team. Section~\ref{sec:IIA} formulates the blue team’s primary task as a baseline control problem, laying the foundation for incorporating deception. 
In Section~\ref{sec:IIB}, we introduce the setting of partial observability and model the red team's inference process using SHT. To facilitate this analysis, we provide necessary mathematical backgrounds and derive the likelihood ratio statistic through a technical lemma. 
Section~\ref{sec:IIC} then integrates both objectives into a unified control framework, formulating a deception-aware strategy within a linear-quadratic setting. By applying the dynamic programming principle and employing a quadratic ansatz, we derive a system of ordinary differential equations (ODEs) that provide a semi-explicit solution to the control problem. 
Furthermore, we establish the well-posedness of the ODE system, ensuring the existence and uniqueness of a global solution, which offers valuable insights into the underlying adversarial dynamics.

\subsection{The Primary Task: A Baseline Model}\label{sec:IIA}

Let \((\Omega,\F,\{\F_t\}_{t\geq 0},\Prob)\) be a filtered probability space that supports two independent standard Brownian motions, \(\{B_t\}\) and \(\{W_t\}\), with the natural filtration \(\F_t = \sigma(B_s,W_s, \forall s\in[0,t])\) generated by these processes. The blue team controls state processes \(\{V_t\}\) and \(\{Y_t\}\), which evolve under the influence of Brownian noise and the control inputs \(\{\alpha_t\}\) and \(\{\beta_t\}\), governed by the following stochastic differential equations:
\begin{align}
    \label{eqn:velocity}
    \ud V_t &= \alpha_t\ud t + \sigma_B\ud B_t,\\
    \label{eqn:position}
    \ud Y_t &= (V_t + \beta_t)\ud t + \sigma_W\ud W_t.
\end{align}
Here, the volatility parameters \(\sigma_B, \sigma_W\) are strictly positive, and the initial conditions \(V_0, Y_0 \in L^2(\Omega)\) are square-integrable random variables. 
Without loss of generality, the state and control processes are assumed to take values in \(\R\).

The primary task is defined over a finite time horizon \([0,T]\). The blue team selects control processes \(\{\alpha_t\}\) and \(\{\beta_t\}\) from the admissible set $ \A := \{\alpha:\alpha$  is progressively  measurable w.r.t.  $\{\F_t\},   \E\int_0^T|\alpha_t|^2\ud t<\infty\}$,
with the objective of minimizing the expected cost:
\begin{equation}
    J^{\text{primary}}(\alpha,\beta) := \E\Big[\int_0^T r(t,V_t,Y_t,\alpha_t,\beta_t)\ud t + g(V_T,Y_T)\Big].
    \label{eqn:primary_cost}
\end{equation}

Denoting the state variables by \(v, y \in \R\) and the control variables by \(\alpha, \beta \in \R\), the running and terminal cost functionals take the form:
\begin{align}
    \label{eqn:running_cost}
    r(t,v,y,\alpha,\beta) &= \frac{r_\alpha}{2}\alpha^2 + \frac{r_\beta}{2}\beta^2 + \frac{r_v}{2}(v-\overline{v}(t))^2,\\
    \label{eqn:terminal_cost}
    g(v,y) &= \frac{t_v}{2}(v-\overline{v}_T)^2.
\end{align}

The parameters \(r_\alpha\), \(r_\beta\), \(r_v\), and \(t_v\) are strictly positive, while \(\overline{v}_T\) is a given real value. Additionally, the target \(\overline{v}:[0,T] \to \R\) is given and assumed to be continuous.

The blue team’s decision-making follows a Markovian control framework \cite{pham2009continuous}, implying that its control processes take feedback forms:
\begin{equation}\label{eq:feedback}
    \alpha_t = \phi^\alpha(t,V_t,Y_t), \quad \beta_t = \phi^\beta(t,V_t,Y_t),
\end{equation}
where the feedback functions \(\phi^\alpha,\phi^\beta\) belong to the function class \(\Phi:= \{\phi:[0,T]\times \R\times\R\to \R:\phi \text{ is Borel measurable},\\\sup_{(t,v,y)\in [0,T]\times \R\times\R}\frac{|\phi(t,v,y)|}{1+|v|+|y|}<\infty\}\). 
This formulation ensures consistency with the constraints imposed by the admissible set \(\A\). 

\begin{remark}[Model Interpretation]\label{rem:model_interp}
    The blue team's primary task, formulated in \eqref{eqn:velocity}--\eqref{eqn:terminal_cost}, is structured as follows. The velocity dynamics $V_t$ are governed by \eqref{eqn:velocity}, where the acceleration control \(\alpha_t\) directly influences its evolution. The position dynamics $Y_t$, described by \eqref{eqn:position}, incorporate the additional effect of \(\beta_t\), which instantaneously modifies the velocity (on the top of \(V_t\)). 
    This control \(\beta_t\) plays a key role in the strategic misdirection framework introduced later. The cost functionals \eqref{eqn:running_cost}--\eqref{eqn:terminal_cost} capture the blue team’s primary objectives: maintaining $V_t$ near the prescribed reference trajectory \(\BAR{v}(t)\) while ensuring that the terminal velocity reaches the target \(\BAR{v}_T\) at time $T$, all while minimizing the control effort, quantified as \(\frac{r_\alpha}{2}\alpha^2 + \frac{r_\beta}{2}\beta^2\).
    
    As demonstrated in Corollary~\ref{cor:baseline}, the optimal solution for the primary cost functional $J^{\text{primary}}$ results in \(\hat{\beta} \equiv 0\). This implies that, when optimally completing the primary task (i.e., misdirecting the adversary is not taken into account),
    the blue team has no incentive to alter its velocity instantaneously, which provides key motivation that underpins the hypothesis formulation \eqref{eqn:hypotheses} for $H_0$ in Section~\ref{sec:IIB}.
\end{remark}

\subsection{SHT-Based Intention Inference under Partial Observability}\label{sec:IIB}

As the red team, a potential adversary, seeks to infer its intentions, the blue team engages in deceptive actions that involve \(\beta_t\) in~\eqref{eqn:position}. 
In the following context, we outline the partially observable nature of the environment, the specific information available to the red team, and how these factors contribute to defining a secondary task that ultimately determines the optimal \(\hat{\beta}_t\) for the blue team.
It is important to clarify that, \textit{the red team considered in Section~\ref{sec:model} is not the actual red team, but rather the one perceived by the blue team---representing its belief in the red team’s knowledge and inference process.} The actual red team’s accessible information set will be quantified later in Section~\ref{sec:game}.

From the perspective of the blue team, the red team possesses complete knowledge of the state dynamics~\eqref{eqn:velocity}--\eqref{eqn:position}, yet lacks direct information about the blue team's primary task~\eqref{eqn:running_cost}--\eqref{eqn:terminal_cost}. While the red team can observe the sample paths of \(\{Y_t\}\), it cannot identify which underlying sample \(\omega\in\Omega\) corresponds to the observed trajectory of \(\{Y_t\}\). Consequently, the sample paths of \(\{B_t,W_t,V_t\}\) remain unobserved by the red team, resulting in a partially observable environment.

Expecting that the red team attempts to infer its intentions based on limited observations, the blue team constructs its own belief regarding the red team’s inference process, which is modeled using SHT. 
To predict the red team’s possible conclusions, the blue team places itself in the red team’s position, formulating a secondary task associated with SHT. It then strategically chooses $\beta_t$ to reduce the probability of a disadvantageous inference by the red team.

To represent the blue team's belief about the red team's inference, we focus on linear controls, motivated by the LQ structure of the primary task.

\begin{assu}[Control]
    \label{assu:linear_ctrl}
    Assume the feedback functions in~\eqref{eq:feedback} are linear in the state variables:
    \begin{align}
        \phi^\alpha(t, v, y) &= b_\alpha(t)v + c_\alpha(t)y + d_\alpha(t),\\
        \phi^\beta(t, v, y) &= b_\beta(t)v + c_\beta(t) y + d_\beta(t),
    \end{align}
    where the coefficients belong to \(C_T:= C([0,T];\R)\), the space of continuous real-valued functions on $[0,T]$.
\end{assu}

From the blue team's viewpoint, the red team conducts SHT based on the following null and alternative hypotheses:
\begin{equation}
    \label{eqn:hypotheses}
    \begin{cases}
    H_0: b_\beta \equiv 0, c_\beta \equiv 0, d_\beta \equiv 0, \\
    H_1: b_\beta \equiv 0, c_\beta = f_c, d_\beta = f_d,
    \end{cases}
\end{equation}
where \(f_c, f_d\) are functions in $C_T$. 
By strategically selecting the misdirection patterns \(f_c\) and \(f_d\), the blue team can manipulate the red team’s inference process, leading it to adopt a desired perception described by \(H_1\). 
If \(H_0\) is rejected, it indicates that the blue team is statistically more likely to behave according to \(H_1\), thus engaging in strategic misdirection.

\begin{remark}
\label{rem:hyp_interp}
The hypotheses in \eqref{eqn:hypotheses} are chosen to reflect the modeling interpretation of misdirection while preserving tractability.
Since the red team does not observe the sample paths of $\{V_t\}$, we restrict attention to misdirection that acts through the observable channel and set $b_\beta\equiv 0$.
The null hypothesis $H_0$ corresponds to the baseline case of no misdirection: Corollary~\ref{cor:baseline} shows that the blue team completes its primary task optimally with $\hat\beta\equiv 0$, so any nonzero $\beta$ is interpreted as an intentional deviation introduced to mislead the red team, and we therefore formulate the test on the feedback coefficients that define $\phi^\beta$.
We adopt a simple (non-composite) alternative in \eqref{eqn:hypotheses} because it preserves the linear--quadratic structure of the blue team's problem, under which the optimal control remains linear and the likelihood ratio admits an explicit characterization.

Under either hypothesis, $\{V_t,Y_t\}$ are well defined on $[0,T]$ with finite second moments and continuous sample paths.

Extensions beyond \eqref{eqn:hypotheses}, such as nonlinear dependencies or inferences on $\phi^\alpha$, are important but technically challenging in the absence of a linear–quadratic structure and are therefore left for future work.

\end{remark}

From a statistical perspective, test~\eqref{eqn:hypotheses} is a simple {\it vs.} simple test. The parameter space of the test is identified as \((C_T)^{3}\), where \(H_0\) and \(H_1\) correspond to the single-element subsets \(\Theta_0:= \{(0,0,0)\}\) and \(\Theta_1:= \{(0,f_c,f_d)\}\) respectively.
Given this structure, the sequential probability ratio test (SPRT)~\cite{tartakovsky2014sequential,goodman2007adaptive,schonbrodt2017sequential} naturally emerges as a suitable choice for SHT. SPRT is known to be the most powerful sequential test, as indicated by Neyman–Pearson-type results~\cite{wald1948optimum}. Throughout the subsequent discussion, SHT specifically refers to SPRT.

To construct the test statistic, we first fix some notations. Consider a measurable space \((C_T,\mathscr{B}_T)\), where \(\mathscr{B}_T\) denotes the associated $\sigma$-field. Let \(\{\eta_t,\xi_t\}\) be two stochastic processes with almost surely continuous sample paths, defined on the filtered probability space \((\Omega,\F,\{\F_t\}_{t\in[0,T]},\mathbb{P})\). The law of \(\{\eta_t\}\) is given by the probability measure \(\mu_\eta\) on \((C_T,\mathscr{B}_T)\), defined as \(\mu_\eta(B) := \Prob(\eta\in B)\) for all \(B\in\mathscr{B}_T\). If \(\mu_\eta\) is absolutely continuous with respect to \(\mu_\xi\), the Radon-Nikodym derivative \(\frac{\ud \mu_\eta}{\ud \mu_\xi}:C_T\to\R_+\) exists and represents the (likelihood ratio) statistic used in SHT.

The following technical lemma from \cite{liptser2013statistics} guarantees the existence and provides the representation of the likelihood ratio between two diffusion-type stochastic processes. 

\begin{lemma}[{\cite[Section~7.6.4 \& Theorem~7.19]{liptser2013statistics}}]
    \label{lem:LR}
Let \(\{\xi_t, \eta_t\}\) be two stochastic processes in \(\R^n\) with dynamics:
    \begin{align}
        \label{eqn:xi}
        \ud \xi_t &= A_t(\xi_t)\ud t + b_t(\xi_t)\ud W^m_t,\\
        \label{eqn:eta}
        \ud \eta_t &= a_t(\eta_t)\ud t + b_t(\eta_t)\ud W^m_t,
    \end{align}
    where \(A_t:\R^n\to\R^n\), \(a_t:\R^n\to\R^n\), \(b_t:\R^n\to\R^{n\times m}\) for any \(t\in[0,T]\), and \(\{W^m_t\}\) is an \(m\)-dimensional Brownian motion.\
    Denote \(x^\dagger := x^{-1}\) if \(x\neq 0\) and zero otherwise.
    If the following conditions hold:
    \begin{enumerate}
        \item \(\eta_0 = \xi_0\text{ and } \Prob(\|\eta_0\|<\infty) = 1\).
        \item \(a_t\) and \(b_t\) satisfy the standard conditions for the existence and uniqueness of the strong solution to~\eqref{eqn:eta}.
        \item For both \(\{x_t\} = \{\xi_t\}\) and \(\{x_t\} = \{\eta_t\}\),
        \begin{multline}
        \int_0^T A_t\transpose(x_t)[b_t(x_t)b_t\transpose(x_t)]^\dagger A_t(x_t)\ud t \\
        +\int_0^Ta_t\transpose(x_t)[b_t(x_t)b_t\transpose(x_t)]^\dagger a_t(x_t)\ud t<\infty,\ a.s.
        \end{multline}
        \item The equation \(b_t(x)\alpha_t(x) = A_t(x) - a_t(x)\) admits a solution for \(\alpha_t(x)\) for any \(t\in[0,T]\), \(x\in \R^n\).
    \end{enumerate}
    Then \(\mu_\xi\) is equivalent to \(\mu_\eta\) and
    \begin{multline}
        \frac{\ud \mu_\eta}{\ud \mu_\xi}(\xi) = \mathrm{exp}\Big\{-\int_0^T[A_t(\xi) - a_t(\xi)]\transpose[b_t(\xi)b_t\transpose(\xi)]^\dagger\ud \xi_t \\
        + \frac{1}{2}\int_0^T[A_t(\xi) - a_t(\xi)]\transpose[b_t(\xi)b_t\transpose(\xi)]^\dagger[A_t(\xi) + a_t(\xi)]\ud t\Big\}.
    \end{multline}
\end{lemma}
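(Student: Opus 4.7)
The plan is to prove the lemma via a Girsanov change of measure: construct $\tilde{\Prob}\ll\Prob$ under which $\{\xi_t\}$ becomes a weak solution of~\eqref{eqn:eta}, and then read off $\ud\mu_\eta/\ud\mu_\xi$ by rewriting the Girsanov density as a path functional of $\xi$.

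First, use Condition~4 to select $\alpha_t(x)$ solving $b_t(x)\alpha_t(x)=A_t(x)-a_t(x)$. The canonical choice $\alpha_t(x)=b_t\transpose(x)[b_t(x)b_t\transpose(x)]^\dagger[A_t(x)-a_t(x)]$ yields $\|\alpha_t(x)\|^2=[A_t-a_t]\transpose[b_tb_t\transpose]^\dagger[A_t-a_t]$, so Condition~3 is exactly the a.s.\ integrability of $t\mapsto\|\alpha_t(\xi_t)\|^2$ (and of the analogous quantity along $\eta$). Define the candidate density
\begin{equation}
Z_t := \exp\Big\{-\int_0^t\alpha_s\transpose(\xi_s)\,\ud W_s^m - \tfrac{1}{2}\int_0^t\|\alpha_s(\xi_s)\|^2\,\ud s\Big\},
\end{equation}
and verify via a localization argument (Novikov is not imposed globally) that $\{Z_t\}_{t\in[0,T]}$ is a genuine martingale.

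Second, apply Girsanov: under $\ud\tilde{\Prob}:=Z_T\,\ud\Prob$, the process $\tilde{W}_t^m:=W_t^m+\int_0^t\alpha_s(\xi_s)\,\ud s$ is an $m$-dimensional Brownian motion, and substituting $b_t(\xi_t)\,\ud W_t^m=b_t(\xi_t)\,\ud\tilde{W}_t^m-[A_t-a_t](\xi_t)\,\ud t$ into~\eqref{eqn:xi} gives $\ud\xi_t=a_t(\xi_t)\,\ud t+b_t(\xi_t)\,\ud\tilde{W}_t^m$. Condition~1 (matching initial data) and Condition~2 (strong uniqueness of~\eqref{eqn:eta}, hence uniqueness in law) together force $\mathrm{law}_{\tilde{\Prob}}(\xi)=\mu_\eta$. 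Consequently, for every $B\in\mathscr{B}_T$, $\mu_\eta(B)=\tilde{\Prob}(\xi\in B)=\E[Z_T\,\mathbf{1}_{\{\xi\in B\}}]$, so $\mu_\eta\ll\mu_\xi$ with density $Z_T$ viewed as a functional of $\xi$. Repeating the construction with $A_t$ and $a_t$ interchanged (using the $\eta$-integrability half of Condition~3) yields $\mu_\xi\ll\mu_\eta$, establishing equivalence. To recast $Z_T$ as a path functional of $\xi$, invert~\eqref{eqn:xi} to write $b_s(\xi_s)\,\ud W_s^m=\ud\xi_s-A_s(\xi_s)\,\ud s$; together with $\alpha_s\transpose=[A_s-a_s]\transpose[b_sb_s\transpose]^\dagger b_s$ this gives $\alpha_s\transpose(\xi_s)\,\ud W_s^m=[A_s-a_s]\transpose[b_sb_s\transpose]^\dagger[\ud\xi_s-A_s\,\ud s]$, and combining the resulting $A_s\,\ud s$ contribution with the quadratic-variation term via the identity $A_s-\tfrac{1}{2}(A_s-a_s)=\tfrac{1}{2}(A_s+a_s)$ collapses the exponent to exactly the expression stated in the lemma.

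The main obstacle is step one: establishing that $\{Z_t\}$ is a true martingale rather than merely a local martingale, without Novikov. The standard workaround (as in Liptser--Shiryaev) uses a stopping-time approximation $\tau_n\uparrow T$ under which $Z_{t\wedge\tau_n}$ is controlled, combined with a uniform-integrability argument that exploits the a.s.\ finiteness guaranteed by Condition~3 along both $\xi$ and $\eta$. A secondary technicality is handling the pseudo-inverse when $b_sb_s\transpose$ is degenerate: one must verify that the chosen $\alpha_t$ lies in the range of $b_t\transpose$ so that the manipulations in the final step are independent of the particular solution selected in Condition~4.
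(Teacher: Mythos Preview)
The paper does not prove this lemma: it is quoted verbatim as a known result from Liptser--Shiryaev (the citation in the lemma header points to Section~7.6.4 and Theorem~7.19 of~\cite{liptser2013statistics}), and the paper only \emph{applies} it in Proposition~\ref{prop:LR_calc}. So there is no in-paper proof to compare against.

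That said, your Girsanov-based sketch is the standard route and is essentially what the cited reference does. The outline is sound: define the exponential local martingale $Z_t$ via the drift-correction $\alpha_t$, change measure so that $\xi$ solves~\eqref{eqn:eta} under $\tilde{\Prob}$, invoke uniqueness in law from Condition~2, and rewrite the exponent as a path functional of $\xi$. You have also correctly identified the two genuine technical hurdles---proving $\{Z_t\}$ is a true martingale without Novikov, and handling the degenerate case of $b_tb_t\transpose$ via the pseudo-inverse---which are precisely where Liptser--Shiryaev invest the real work. If you were to flesh this out, the martingale step would need the full localization-plus-uniform-integrability argument rather than just a mention of it; as written, your proposal is an accurate high-level plan but not yet a proof.
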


Due to space limitations, the standard Lipschitz continuity and growth conditions required for the existence and uniqueness of a strong solution to~\eqref{eqn:eta} are omitted here; full details can be found in equations~(4.110) and~(4.111) of \cite{liptser2013statistics}. As an application of Lemma~\ref{lem:LR}, the following Proposition~\ref{prop:LR_calc} provides the likelihood ratio calculation for SHT, the proof of which is detailed in Appendix~\ref{app:LR_calc}.

\begin{prop}
    \label{prop:LR_calc}
    Let \(\mu^{H_0}_{(V,Y)}\) and \(\mu^{H_1}_{(V,Y)}\) denote the law of \(\{V_t,Y_t\}\) under \(H_0\) and \(H_1\), respectively.
    Under Assumption~\ref{assu:linear_ctrl}, the likelihood ratio statistic for SHT is given by:
    \begin{align}
        \label{eqn:L_t}
        L_T(V,Y) &:= \frac{\ud \mu^{H_1}_{(V,Y)}}{\ud \mu^{H_0}_{(V,Y)}}(V,Y)\notag\\
        &= \mathrm{exp}\Bigg\{\frac{1}{\sigma_W^2}\Bigg[
        \int_0^T \left(f_c(t)Y_t + f_d(t)\right)\ud Y_t\notag\\
        &- \int_0^T V_t\left(f_c(t)Y_t + f_d(t)\right)\ud t \notag\\
        &- \frac{1}{2}\int_0^T \left(f_c(t) Y_t + f_d(t)\right)^2 \ud t    \Bigg]\Bigg\},
    \end{align}
    where the stochastic processes \((V,Y)\) evolve according to the dynamics~\eqref{eqn:velocity}--\eqref{eqn:position} under \(H_0\). 
\end{prop}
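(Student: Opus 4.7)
The plan is to apply Lemma~\ref{lem:LR} to the two-dimensional joint process $X_t := (V_t, Y_t)\transpose$ on $\R^2$, driven by the Brownian motion $W^2_t := (B_t, W_t)\transpose$. Since the Radon--Nikodym derivative in~\eqref{eqn:L_t} is evaluated along an $H_0$-path, I would identify $\xi \leftrightarrow H_0$ and $\eta \leftrightarrow H_1$, so that under Assumption~\ref{assu:linear_ctrl} the drifts become
\begin{align}
A_t(v,y) &= \bigl(b_\alpha(t) v + c_\alpha(t) y + d_\alpha(t),\; v\bigr)\transpose,\\
a_t(v,y) &= \bigl(b_\alpha(t) v + c_\alpha(t) y + d_\alpha(t),\; v + f_c(t) y + f_d(t)\bigr)\transpose,
\end{align}
while the diffusion matrix $b_t = \mathrm{diag}(\sigma_B, \sigma_W)$ is constant and invertible; hence $[b_t b_t\transpose]^{-1} = \mathrm{diag}(\sigma_B^{-2}, \sigma_W^{-2})$.

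Next I would verify the four conditions of Lemma~\ref{lem:LR}. Condition~(1) is immediate with the common initial datum $(V_0, Y_0)\transpose \in L^2(\Omega)$. Condition~(2) holds because $a_t$ is affine in $(v,y)$ with coefficients in $C_T$, hence uniformly bounded on $[0,T]$, yielding the standard Lipschitz and linear-growth estimates. Condition~(4) is trivial since $b_t$ is invertible: one may simply set $\alpha_t(x) = b_t^{-1}[A_t(x) - a_t(x)]$. The substantive check is Condition~(3): since $[b_t b_t\transpose]^{-1}$ is constant and $A_t, a_t$ grow linearly in $x$, the displayed integrals are bounded by a constant multiple of $\int_0^T (1 + V_t^2 + Y_t^2)\,\ud t$, whose almost-sure finiteness along both the $H_0$- and $H_1$-paths is precisely the content of Remark~\ref{rem:hyp_interp} and follows from standard moment bounds for linear SDEs on a compact time interval.

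The final step is to substitute into the Lemma~\ref{lem:LR} formula. The key simplification is that $A_t(x) - a_t(x) = (0,\; -f_c(t) y - f_d(t))\transpose$ has a vanishing first coordinate, so only the $Y$-component of the path contributes. Explicitly,
\begin{equation}
-[A_t - a_t]\transpose [b_t b_t\transpose]^{-1}\, \ud \xi_t = \frac{1}{\sigma_W^2}\bigl(f_c(t) Y_t + f_d(t)\bigr)\, \ud Y_t,
\end{equation}
and, since the second coordinate of $A_t + a_t$ is $2V_t + f_c(t) Y_t + f_d(t)$,
\begin{equation}
\tfrac{1}{2}[A_t - a_t]\transpose [b_t b_t\transpose]^{-1}[A_t + a_t] = -\frac{V_t(f_c(t) Y_t + f_d(t))}{\sigma_W^2} - \frac{(f_c(t) Y_t + f_d(t))^2}{2\sigma_W^2}.
\end{equation}
Integrating these contributions over $[0,T]$, summing, and factoring out $\sigma_W^{-2}$ reproduces exactly~\eqref{eqn:L_t}.

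The main obstacle will be Condition~(3): establishing pathwise quadratic integrability of $(V_t, Y_t)$ under \emph{both} hypotheses simultaneously, since the $H_1$ dynamics couples $Y_t$ back into its own drift and a naive Gr\"onwall bound must handle both cases. Once this is secured via Remark~\ref{rem:hyp_interp}, the remainder is a routine $2\times 2$ linear-algebra computation that exploits the fact that the drift difference has a single nonzero coordinate.
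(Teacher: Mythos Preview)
Your proposal is correct and follows essentially the same approach as the paper: identify $\xi\leftrightarrow(V,Y)$ under $H_0$, $\eta\leftrightarrow(V,Y)$ under $H_1$, read off the drifts and the diagonal diffusion matrix, check the hypotheses of Lemma~\ref{lem:LR} (using linearity of the dynamics and invertibility of $b_t$), and then substitute. In fact your verification of Conditions~(1)--(4) and the explicit $2\times 2$ computation are more detailed than the paper's own proof, which simply asserts that the conditions hold and that~\eqref{eqn:L_t} follows by direct substitution.
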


The SHT statistic \(L_T(V,Y)\) is measurable with respect to the $\sigma$-field \(\sigma(V_t,Y_t,\forall t\in[0,T])\). 
While it relies on \(V\), which remains unobservable to the red team, it is accessible to the blue team, who conducts strategic misdirection based on its belief in the red team.
A higher value of \(L_T\) increases the tendency to reject \(H_0\), whereas a lower value favors its acceptance.

In hypothesis testing, test statistics are computed under \(H_0\). However, it is important to note that when conducting the test using empirical observations \(\tilde{v},\tilde{y} \in C_T\) of the sample paths of \(\{V_t,Y_t\}\), \(L_T\) should be evaluated as \(L_T(\tilde{v},\tilde{y})\).

\subsection{Linear-Quadratic Model for Strategic Misdirection}\label{sec:IIC}

Having established the SHT (likelihood ratio) statistic \(L_T\), we now formulate the blue team’s bi-objective optimization problem by incorporating the primary task from Section~\ref{sec:IIA}. We then derive a semi-explicit solution for the optimal strategy, reducing the problem to solving a system of ODEs, and prove the global existence and uniqueness of the solution.

From the blue team’s perspective, the envisioned red team applies SHT to detect perturbations in the sample paths of \(\{Y_t\}\) in an effort to infer the blue team’s true intentions. Anticipating this, the blue team strategically introduces perturbations in \(Y_t\), even if it compromises full optimization of the primary task. Given the interpretation of \(L_T\) and Proposition~\ref{prop:LR_calc}, the blue team aims to maximize \(\log L_T\), which motivates adding the term \(-\E \log L_T\) to its primary cost~\eqref{eqn:primary_cost}, as calculated in Proposition~\ref{prop:E_log_L} (see Appendix~\ref{app:LR_calc} for the proof).

\begin{prop}
    \label{prop:E_log_L}
    Under Assumption~\ref{assu:linear_ctrl}, the expected log-likelihood ratio statistic, evaluated at the empirically observed trajectories \((V,Y)\), is given by:
    \begin{multline}
        \label{eqn:E_log_Lt}
        \E \log L_T = \frac{1}{\sigma_W^2}\E\int_0^T \Big[(f_c(t)Y_t  + f_d(t))\beta_t \\- \frac{1}{2}(f_c(t)Y_t +f_d(t))^2 \Big]\ud t.
    \end{multline} 
\end{prop}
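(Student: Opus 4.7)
The plan is to take the logarithm of the likelihood ratio formula from Proposition~\ref{prop:LR_calc}, substitute the \emph{actual} observed dynamics of \(Y_t\) (which carry the misdirection drift \(\beta_t\) rather than the \(H_0\) dynamics), and then take expectation so that a stochastic integral term vanishes and two drift terms cancel cleanly.

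More concretely, first I would write
\begin{equation*}
\log L_T = \frac{1}{\sigma_W^2}\!\left[\int_0^T (f_c(t)Y_t + f_d(t))\,\ud Y_t - \int_0^T V_t(f_c(t)Y_t + f_d(t))\,\ud t - \frac{1}{2}\!\int_0^T (f_c(t)Y_t + f_d(t))^2\,\ud t\right],
\end{equation*}
and then use that on the empirically observed paths \(\ud Y_t = (V_t + \beta_t)\,\ud t + \sigma_W\,\ud W_t\) from~\eqref{eqn:position} to split the It\^o integral into a Lebesgue piece and a Brownian piece. This converts the first integral into
\begin{equation*}
\int_0^T (f_c(t)Y_t + f_d(t))(V_t + \beta_t)\,\ud t + \sigma_W\!\int_0^T (f_c(t)Y_t + f_d(t))\,\ud W_t.
\end{equation*}
When I then take \(\E\) and combine with the other two terms, the \(V_t(f_c(t)Y_t + f_d(t))\) drift cancels term-by-term against the second integral in \(\log L_T\), leaving exactly the expression claimed in~\eqref{eqn:E_log_Lt}, provided the Brownian integral has zero expectation.

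The one technical point to check is precisely that martingale property: I need \(\E\int_0^T (f_c(t)Y_t + f_d(t))^2\,\ud t<\infty\) so that the stochastic integral is a true (mean-zero) martingale and Fubini is applicable to the Lebesgue terms. Under Assumption~\ref{assu:linear_ctrl}, the feedback functions \(\phi^\alpha,\phi^\beta\) are affine in \((V_t,Y_t)\) with continuous coefficients on the compact interval \([0,T]\), so \eqref{eqn:velocity}--\eqref{eqn:position} form a linear SDE system with square-integrable initial data; a standard Gronwall argument then gives \(\sup_{t\in[0,T]}\E Y_t^2<\infty\) (as already noted in Remark~\ref{rem:hyp_interp}), and together with the continuity (hence boundedness) of \(f_c,f_d\) on \([0,T]\), the required integrability follows immediately.

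I do not anticipate any real obstacle beyond this bookkeeping: the result is essentially a one-line cancellation after substituting the true \(Y\)-dynamics and removing the martingale term. The only place to be careful is to distinguish that Proposition~\ref{prop:LR_calc} gives the \emph{functional form} of \(L_T\) (derived via Girsanov under \(H_0\)-type reference dynamics), whereas \(\E\log L_T\) in Proposition~\ref{prop:E_log_L} is computed along the blue team's actual trajectories \((V,Y)\) governed by~\eqref{eqn:velocity}--\eqref{eqn:position} with its chosen \(\beta_t\); it is precisely this substitution of the true drift \(V_t+\beta_t\) into \(\ud Y_t\) that produces the \((f_c Y + f_d)\beta_t\) term appearing in~\eqref{eqn:E_log_Lt}.
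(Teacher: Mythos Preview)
Your proposal is correct and follows essentially the same route as the paper: take the logarithm of~\eqref{eqn:L_t}, substitute the actual dynamics~\eqref{eqn:position} for \(\ud Y_t\), and use that the stochastic integral \(\int_0^T(f_c(t)Y_t+f_d(t))\,\ud W_t\) is a true martingale (via the square-integrability you justified) so that its expectation vanishes and the \(V_t\) drift terms cancel. The paper's proof is just a terser version of exactly this argument.
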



Considering both the primary task~\eqref{eqn:primary_cost} and strategic misdirection~\eqref{eqn:E_log_Lt}, the blue team now aims to minimize:
\begin{equation}\label{def:J}
    J_{\text{blue}}(\alpha,\beta) := J^{\text{primary}}(\alpha, \beta) - \lambda\E\log L_T,
\end{equation}
where \(0\leq \lambda\leq r_\beta\sigma_W^2\) characterizes the intensity of strategic misdirection. The upper bound of \(\lambda\) is required in Theorem~\ref{thm:global_well} to ensure the well-posedness of the problem.

The blue team's linear-quadratic model for strategic misdirection emerges from this new stochastic control problem.
It preserves the state dynamics~\eqref{eqn:velocity}--\eqref{eqn:position} and the terminal cost~\eqref{eqn:terminal_cost}, but modifies the running cost (c.f.~\eqref{eqn:running_cost}) to:
\begin{multline}
    \label{eqn:mod_running_cost}
    h(t,v,y,\alpha,\beta) := r(t,v,y,\alpha,\beta)\\
    - \frac{\lambda}{\sigma^2_W}(f_c(t)y + f_d(t)) \beta + \frac{\lambda}{2\sigma_W^2}(f_c(t)y + f_d(t))^2,
\end{multline}
which incorporates the integrands from \eqref{eqn:E_log_Lt}.

The rest of Section~\ref{sec:IIC} is dedicated to solving the linear-quadratic control problem~\eqref{def:J}. Let \(\mathcal{V}(t, v, y):[0,T]\times \R\times \R\to\R\) be the value function, which represents the minimized cost when the system starts at $(V_t, Y_t) = (v, y)$. 
Applying the dynamic programming principle, $\mathcal{V}$ satisfies the Hamilton-Jacobi-Bellman (HJB) equation:
\begin{multline}
    \partial_t \mathcal{V} + \inf_{\alpha,\beta}\{\alpha\partial_v \mathcal{V} + (v+\beta)\partial_y \mathcal{V} + \frac{1}{2}\sigma_B^2\partial_{vv}\mathcal{V} \\+ \frac{1}{2}\sigma_W^2\partial_{yy}\mathcal{V}  
    + h(t,v,y,\alpha,\beta)\} = 0,
    \label{eqn:HJB}
\end{multline}
with a terminal condition $
    \mathcal{V}(T,v,y) = \frac{t_v}{2}\big(v - \overline{v}_T\big)^2$.
Minimizing over $\alpha,\beta$ produces the optimal controls: 
\begin{align}
    \label{eqn:alpha_hat}
    \hat{\alpha} &=  -\frac{1}{r_\alpha}\partial_v \mathcal{V},\\
    \label{eqn:beta_hat}
    \hat{\beta} &= \frac{1}{r_\beta}\Big[ \frac{\lambda}{\sigma_W^2}\Big(f_c(t)y + f_d(t)\Big) - \partial_y \mathcal{V}\Big].
\end{align}
Adopting a quadratic ansatz, we assume
\begin{equation}
    \mathcal{V}(t,v,y) = \frac{\mu_t}{2}v^2 + \eta_tvy + \frac{\rho_t}{2}y^2 + \gamma_t v + \theta_t y +  \xi_t,
\end{equation}
where \(\mu\), \(\eta\), \(\rho\), \(\gamma\), \(\theta\), \(\xi \in C_T\) .
Plugging \eqref{eqn:alpha_hat} and~\eqref{eqn:beta_hat} back into \eqref{eqn:HJB} and collecting corresponding coefficients yield a system of ODEs:
\begin{equation}
    \label{eqn:ODE_system}
    \begin{cases}
        \dot{\mu}_t = \frac{1}{r_\alpha}\mu_t^2 + \frac{1}{r_\beta}\eta_t^2 - 2\eta_t - r_v,\\
        \dot{\eta}_t = \frac{1}{r_\alpha}\mu_t \eta_t + \frac{1}{r_\beta}\rho_t \eta_t - \rho_t - \frac{\lambda}{r_\beta \sigma_W^2}\eta_t f_c(t),\\
        \dot{\rho}_t = \frac{1}{r_\alpha}\eta_t^2 + \frac{1}{r_\beta} \rho_t^2 - \frac{2\lambda}{r_\beta \sigma_W^2}\rho_t f_c(t) + (\frac{\lambda^2}{r_\beta \sigma_W^4} - \frac{\lambda}{\sigma_W^2})f_c^2(t),\\
        \dot{\gamma}_t = \frac{1}{r_\alpha} \mu_t \gamma_t + \frac{1}{r_\beta}\eta_t \theta_t - \theta_t + r_v \overline{v}(t) - \frac{\lambda}{r_\beta \sigma_W^2}\eta_t f_d(t),\\
        \dot{\theta}_t = \frac{1}{r_\alpha}\eta_t \gamma_t + \frac{1}{r_\beta}\rho_t \theta_t - \frac{\lambda}{r_\beta \sigma_W^2}\theta_t f_c(t) - \frac{\lambda}{r_\beta \sigma_W^2}f_d(t) \rho_t \\\quad\quad + (\frac{\lambda^2}{r_\beta \sigma_W^4} - \frac{\lambda}{\sigma_W^2})f_c(t) f_d(t),\\
        \dot{\xi}_t = \frac{1}{2r_\alpha}\gamma_t^2 + \frac{1}{2r_\beta}\theta_t^2 - \frac{1}{2}\sigma_B^2 \mu_t - \frac{1}{2}\sigma_W^2 \rho_t - \frac{\lambda}{r_\beta \sigma_W^2}f_d(t) \theta_t \\
        \quad \quad- \frac{r_v[\overline{v}(t)]^2}{2} + (\frac{\lambda^2}{2 r_\beta \sigma_W^4} - \frac{\lambda}{2 \sigma_W^2})f_d^2(t),
    \end{cases}
\end{equation}
with terminal conditions
\begin{align}
    \mu_T &= t_v,\ 
    \eta_T = 0, \
    \rho_T = 0, \
    \gamma_T = -t_v\overline{v}_T,\\
    \theta_T &= 0,\
    \xi_T = \frac{t_v}{2}\big(\overline{v}_T\big)^2.
\end{align}
The semi-explicit solution to this linear-quadratic control problem is given by
\begin{equation}
    \label{eqn:solution}
    \begin{aligned}
        \hat{\alpha}(t,v,y) = &-\frac{\mu_t}{r_\alpha} v - \frac{\eta_t}{r_\alpha} y - \frac{\gamma_t}{r_\alpha},\\
        \hat{\beta}(t,v,y) = &-\frac{\eta_t}{r_\beta} v + \Big(\frac{\lambda}{r_\beta\sigma_W^2}f_c(t) - \frac{\rho_t}{r_\beta}\Big) y \\ &+ \Big(\frac{\lambda}{r_\beta\sigma_W^2}f_d(t) - \frac{\theta_t}{r_\beta}\Big),
    \end{aligned}
\end{equation}
which justifies Assumption~\ref{assu:linear_ctrl}. This solution provides key insights into optimal deception strategies in adversarial settings. Although $\alpha$ and $\beta$ are designated for the primary task and strategic misdirection respectively, the optimal solution inherently couples them, leading to $\hat\alpha$ and $\hat\beta$ differing from the solutions obtained if these tasks were treated independently. By comparing the coefficients in \(\hat{\beta}\) with those in \(H_1\) of hypothesis~\eqref{eqn:hypotheses}, it becomes evident that the blue team strategically spends its misdirection efforts towards a specific pattern dictated by \(f_c\) and \(f_d\). Philosophically, the effectiveness of optimally introducing perturbations is meaningful only when constraints define the desired perturbation patterns—this serves as a key motivation for employing a simple {\it vs.} simple SHT rather than incorporating information divergences into the cost functionals. However, the dependencies of \(\hat{\alpha}\) and \(\hat{\beta}\) on \( f_c \) and \( f_d \) are highly nonlinear, as variations in $f_c$ or $f_d$ propagate through multiple state variables via the interdependent dynamics of \((\mu_t, \eta_t, \rho_t, \gamma_t, \theta_t)\). Consequently, these dependencies generate complex, coupled effects rather than straightforward additive or multiplicative control adjustments.


To establish the well-posedness of the ODE system~\eqref{eqn:ODE_system}, we present the following Theorem~\ref{thm:global_well}, with its proof provided in our earlier work \cite{zhou2025integrating}. 



\begin{theorem}[Global Existence and Uniqueness]
    \label{thm:global_well}
    Under the condition \(0\leq\lambda \leq r_\beta \sigma_W^2\), the ODE system~\eqref{eqn:ODE_system} has a unique global solution on \([0,T]\), for any \(T>0\).
\end{theorem}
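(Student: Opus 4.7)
The approach is to invoke the standard Picard--Lindelof theorem for local existence and then preclude finite-time blow-up when integrating backwards from $T$. Since the right-hand side of \eqref{eqn:ODE_system} is polynomial in $(\mu,\eta,\rho,\gamma,\theta,\xi)$ with continuous coefficients in $t$, it is locally Lipschitz, and a unique maximal backward solution exists on some interval $(t^{\ast},T]$; the task is to show $t^{\ast}<0$ for every $T>0$. I would exploit the triangular cascade of the system: the first three equations form a closed subsystem in $(\mu_t,\eta_t,\rho_t)$; given these, $(\gamma_t,\theta_t)$ satisfy a linear planar ODE; and $\xi_t$ is obtained by direct integration. Hence the substantive work is an a priori bound on $(\mu,\eta,\rho)$.

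Within this subsystem, $\mu$ and $\rho$ are Riccati-type, while $\eta$ is linear in itself once $\mu$ and $\rho$ are known. The decisive step is to rewrite the $\rho$ equation via completion of the square,
\begin{equation*}
\dot\rho_t = \frac{\eta_t^2}{r_\alpha} + \frac{1}{r_\beta}\Bigl(\rho_t - \frac{\lambda f_c(t)}{\sigma_W^2}\Bigr)^{\!2} - \frac{\lambda}{\sigma_W^2}\, f_c(t)^2,
\end{equation*}
where the hypothesis $\lambda\leq r_\beta\sigma_W^2$ is precisely what makes the pure $f_c^{2}$ coefficient $\tfrac{\lambda^{2}}{r_\beta\sigma_W^{4}}-\tfrac{\lambda}{\sigma_W^{2}}$ non-positive, which in turn sign-controls the inhomogeneous forcing for the shifted variable $\tilde\rho_t := \rho_t - \lambda f_c(t)/\sigma_W^{2}$. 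Starting from $\rho_T=0$ and using continuity of $f_c$, I would derive a Riccati inequality comparable to a scalar autonomous Riccati equation with bounded forcing; such comparison delivers an a priori bound on any compact backward interval. A parallel treatment of the $\mu$ equation---whose inhomogeneity $\eta_t^2/r_\beta - 2\eta_t - r_v$ is controlled as soon as $\eta$ is---combined with the linear-in-$\eta$ structure of the middle equation, permits a joint Gronwall-type bootstrap: assume a solution on $[t_0,T]$, produce bounds depending only on $T-t_0$ and on $\|f_c\|_\infty$, and conclude that the maximal solution extends past $0$.

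Once $(\mu,\eta,\rho)$ is bounded on $[0,T]$, the equations for $(\gamma,\theta)$ are linear with bounded continuous coefficients and continuous forcing, so Gronwall's inequality immediately yields global existence and uniqueness; finally $\xi$ is obtained by direct antidifferentiation. The chief obstacle is the coupled Riccati pair $(\mu,\rho)$: one must verify that the square-completion really does globalize the estimate for $\rho$ despite the cross-coupling through $\eta$. This is where the condition $\lambda\leq r_\beta\sigma_W^2$ is essential---without it the $f_c^2$ driver in the $\rho$ equation acquires a positive sign and one expects Riccati-type blow-up regardless of how small $f_c$ is---so the bound on $\lambda$ should be viewed as a structural well-posedness condition rather than a smallness assumption.
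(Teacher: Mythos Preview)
The paper does not contain a proof of this theorem; it simply states that the argument is ``provided in our earlier work~\cite{zhou2025integrating}'', so there is no in-text proof to compare your proposal against.

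Your structural observations are correct: the system is triangular with a closed $(\mu,\eta,\rho)$ block, the $(\gamma,\theta)$ block is linear once the first block is known, and $\xi$ is a quadrature. Your completion of the square in the $\rho$ equation is accurate, and you have correctly identified that $0\le\lambda\le r_\beta\sigma_W^2$ is exactly what makes the coefficient $\tfrac{\lambda^2}{r_\beta\sigma_W^4}-\tfrac{\lambda}{\sigma_W^2}$ non-positive. Where the proposal remains genuinely incomplete is the step you yourself flag as the ``chief obstacle'': the backward Riccati $\dot\rho=\tfrac{1}{r_\alpha}\eta^2+\tfrac{1}{r_\beta}(\rho-\tfrac{\lambda f_c}{\sigma_W^2})^2-\tfrac{\lambda}{\sigma_W^2}f_c^2$ still carries a \emph{positive} square in $\rho$, so sign control of the inhomogeneity alone does not preclude blow-up (in backward time, blow-down to $-\infty$). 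A comparison with a scalar autonomous Riccati does not close the loop because $\eta$ feeds back into $\rho$ and vice versa through $\dot\eta$, and the ``joint Gronwall-type bootstrap'' you invoke is not a standard lemma---it needs to be made precise. One clean route, which you may find more tractable than direct ODE estimates, is to exploit the LQ origin of the system: the condition $\lambda\le r_\beta\sigma_W^2$ makes the reduced running cost $\min_{\alpha,\beta}h(t,v,y,\alpha,\beta)$ nonnegative-definite in $(v,y)$, so the value function $\mathcal V(t,v,y)$ is finite and the $2\times2$ Riccati block $\bigl(\begin{smallmatrix}\mu&\eta\\ \eta&\rho\end{smallmatrix}\bigr)$ stays positive semidefinite, which supplies the missing two-sided bound.
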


With global existence and uniqueness established, we next analyze the optimal strategies in scenarios where strategic misdirection is absent or SHT becomes trivial. As shown in Corollary~\ref{cor:baseline}, the optimal control $\hat\beta$ remains trivial when misdirection is not considered $(\lambda =0)$ or when the hypotheses in~\eqref{eqn:hypotheses} are identical $(H_0 = H_1)$. In such cases, the model simplifies to the blue team's primary task, where the optimal strategy does not involve any perturbations.

\begin{cor}
    \label{cor:baseline}
    Under the condition \(0\leq\lambda \leq r_\beta \sigma_W^2\), if \(f_c \equiv f_d \equiv 0\) or \(\lambda=0\), then the unique solution to the ODE system~\eqref{eqn:ODE_system} satisfies \(\eta\equiv\rho\equiv\theta\equiv 0\), implying that \(\hat{\beta} \equiv 0\).
\end{cor}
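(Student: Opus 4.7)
The plan is to verify directly that the triple $\eta \equiv \rho \equiv \theta \equiv 0$ solves the subsystem of~\eqref{eqn:ODE_system} governing these three functions (together with the matching terminal conditions) under either hypothesis, then invoke the global uniqueness established in Theorem~\ref{thm:global_well} to conclude this must be \emph{the} solution, and finally substitute into~\eqref{eqn:solution} to read off $\hat{\beta}\equiv 0$.

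First I observe that the equations for $\eta_t$, $\rho_t$, and $\theta_t$ in~\eqref{eqn:ODE_system} form a closed subsystem in the sense that once $\mu_t$ and $\gamma_t$ are viewed as given, no other unknowns enter; moreover, every term on the right-hand side of the $\eta$-equation and every term in the $\rho$- and $\theta$-equations except the inhomogeneous pieces $\bigl(\tfrac{\lambda^2}{r_\beta\sigma_W^4}-\tfrac{\lambda}{\sigma_W^2}\bigr)f_c^2(t)$ and $\bigl(\tfrac{\lambda^2}{r_\beta\sigma_W^4}-\tfrac{\lambda}{\sigma_W^2}\bigr)f_c(t)f_d(t)$ is at least linear in $(\eta,\rho,\theta)$. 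Consequently, plugging $\eta_t=\rho_t=\theta_t=0$ makes the right-hand sides of the $\eta$-, $\rho$-, and $\theta$-equations collapse to exactly these inhomogeneous terms.

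Next I handle the two cases. If $\lambda=0$, the factor $\tfrac{\lambda^2}{r_\beta\sigma_W^4}-\tfrac{\lambda}{\sigma_W^2}$ vanishes, killing both inhomogeneities. If instead $f_c\equiv f_d\equiv 0$, the products $f_c^2(t)$ and $f_c(t)f_d(t)$ vanish identically. In either case the candidate $\eta\equiv\rho\equiv\theta\equiv 0$ satisfies $\dot\eta_t=\dot\rho_t=\dot\theta_t=0$ on $[0,T]$, and it also matches the terminal data $\eta_T=\rho_T=\theta_T=0$ from~\eqref{eqn:ODE_system}. Hence it is a solution of the full system when paired with the $\mu_t$, $\gamma_t$, $\xi_t$ obtained from the resulting (now decoupled) equations. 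Theorem~\ref{thm:global_well} guarantees uniqueness on $[0,T]$ under the standing assumption $0\le\lambda\le r_\beta\sigma_W^2$, so this identically-zero triple is the unique solution.

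Finally, substituting $\eta_t=\rho_t=\theta_t=0$ into the formula for $\hat{\beta}$ in~\eqref{eqn:solution} gives
\begin{equation}
\hat{\beta}(t,v,y)=\frac{\lambda}{r_\beta\sigma_W^2}\bigl(f_c(t)y+f_d(t)\bigr),
\end{equation}
which vanishes identically whenever $\lambda=0$ or $f_c\equiv f_d\equiv 0$, completing the argument. There is essentially no obstacle: the proof is a direct consistency check, and the only subtlety worth flagging is that one must appeal to uniqueness (rather than merely exhibiting the zero solution) to rule out other branches that could produce a nonzero $\hat{\beta}$.
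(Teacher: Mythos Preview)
Your proposal is correct and follows essentially the same approach as the paper: verify that $\eta\equiv\rho\equiv\theta\equiv 0$ satisfies the relevant equations and terminal data under either hypothesis, invoke the uniqueness from Theorem~\ref{thm:global_well}, and substitute into~\eqref{eqn:solution}. Your write-up is simply more explicit about which terms vanish and why uniqueness is essential, but the underlying argument is identical.
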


\begin{proof}
    If \(f_c\equiv f_d\equiv 0\) or \(\lambda = 0\), the ODEs governing \(\rho_t\) and \(\theta_t\) reduce to homogeneous equations, leading to \(\eta\equiv \rho\equiv \theta\equiv 0\) as a solution to system~\eqref{eqn:ODE_system}. 
    By the uniqueness result in Theorem~\ref{thm:global_well}, this solution is unique.
    Substituting these values into \eqref{eqn:solution}, we conclude that \(\hat{\beta}\equiv 0\).
\end{proof}

\begin{remark}
    In particular, when \(\lambda = r_\beta\sigma_W^2\), \(\eta\equiv \rho\equiv \theta\equiv 0\) is the unique solution to system~\eqref{eqn:ODE_system}, using an argument analogous to Corollary~\ref{cor:baseline}.
    In this case, the optimal controls in~\eqref{eqn:solution} have the forms
    \begin{equation}
        \hat{\alpha}(t,v,y) = -\frac{\mu_t}{r_\alpha} v - \frac{\gamma_t}{r_\alpha},\quad
        \hat{\beta}(t,v,y) =  f_c(t) y + f_d(t).
    \end{equation}
    Here, the optimal \(\hat{\beta}\) depends only on the position \(y\), precisely matching the form of hypothesis \(H_1\) in~\eqref{eqn:hypotheses}, while \(\hat{\alpha}\) depends solely on velocity \(v\). In other words, when the strategic misdirection is at full intensity, the roles of \(\alpha\) and \(\beta\) become fully decoupled:
     \(\alpha\) adjusts acceleration to fulfill the primary task, and \(\beta\) is entirely responsible for introducing misdirection.
\end{remark}

\section{The Stackelberg Game}\label{sec:game}

This section explores how the actual red team, rather than the envisioned one, strategically responds to the blue team's misdirection. Without loss of generality, we focus on a simplified version of the LQ model introduced in Section~\ref{sec:IIC}, where we set \(f_d\equiv \BAR{v}\equiv 0\) and \(\BAR{v}_T = 0\), resulting in $\gamma \equiv \theta \equiv 0$.

The actual red team is assumed to have knowledge of the governing dynamics~\eqref{eqn:velocity}--\eqref{eqn:position} but can only observe the sample paths of \(\{Y_t\}\). 
However, it has agents capable of:
\begin{itemize}
    \item Revealing the blue team's current misdirection choice \(f_c\), the functional forms of its strategies \(\hat{\alpha},\hat{\beta}\) in \eqref{eqn:solution}, and knowledge on how \(f_c\) affects the time-dependent coefficients in \(\hat{\alpha},\hat{\beta}\) (encoded in the ODEs~\eqref{eqn:ODE_system})\footnote{Lacking prior knowledge of the LQ structure, the red team can not uniquely determine the blue team’s cost functionals. This corresponds to the inverse reinforcement learning (IRL) problem, where a given control strategy is generally optimal under multiple cost functionals \cite{ng2000algorithms}.}.
    \item Manipulating the blue team's misdirection signal $f_c$. 
\end{itemize}
Understanding that the observed trajectories of \(\{Y_t\}\) reflect perturbations introduced by the blue team, the red team aims to lure the blue team into voluntarily reducing its misdirection efforts, while carefully avoiding any actions that might reveal the existence of its agents.

\smallskip
\noindent\textbf{Stackelberg Game Formulation.} The red team initiates the interaction by optimizing $f_c$, after which the blue team solves its LQ problem with SHT, using the instilled $\hat f_c$, as described in Section~\ref{sec:model}. This interaction forms a Stackelberg game, where the red team’s optimization consists of two key components:
\begin{itemize}
    \item \textbf{Misdirection Control}: it aims to minimize $\E \log L_T$, based on the information provided in \eqref{eqn:solution}, encouraging the blue team to act more in line with \(H_0\) rather than \(H_1\) over $[0,T]$. 
    If the red team confirms $H_0$ in \eqref{eqn:hypotheses}, it concludes that the blue team is not introducing any deliberate perturbations into the observed process \(\{Y_t\}\), in which case the realized sample path can be reliably used for further analysis.
    Notably, the red team targets the distributional impact of misdirection mitigation, rather than reacting to the pathwise fluctuations of the observed process $\{Y_t\}$.
    \item \textbf{Regularization}: To reflect the blue team's skepticism toward externally influenced \(f_c\), we introduce a penalty term \(\mathcal{P}(f_c)\) to promote more realistic strategic interaction. 
\end{itemize}

Without regularization, the optimal \(f_c\) that minimizes \(\E \log L_T\) may be unrealistic, particularly when it lies too close to the null hypothesis, making it unacceptable for the blue team to adopt in its envisioned SHT. To illustrate this concern, we first consider the unregularized case, where the red team aims to minimize \(\E \log L_T\) without accounting for the blue team's skepticism. The objective function of the red team is 
\begin{equation}\label{eqn:red_min_obj_unreg}
    J_{\text{red}}(f_c) := \E[\log \hat{L}_T],
\end{equation}
where $\hat{L}_T$ is defined in \eqref{eqn:L_t}, evaluated at the trajectories $(\hat V, \hat Y)$, which follow \eqref{eqn:velocity}--\eqref{eqn:position}  under the optimal controls $(\hat\alpha, \hat \beta)$ provided in \eqref{eqn:solution}.

Using \eqref{eqn:velocity}--\eqref{eqn:position} and \eqref{eqn:solution}, Itô's formula and Fubini's theorem imply that
\begin{multline}
\label{eqn:exp_log}
   \E[\log \hat L_T] = \frac{1}{\sigma_W^2}\int_0^T \bigg[\left(-\frac{\eta_t}{r_\beta}h_{11}(t) - \frac{\rho_t}{r_\beta}h_{02}(t)\right)f_c(t) \\
    + \left(\frac{\lambda}{r_\beta \sigma_W^2}-\frac{1}{2}\right)h_{02}(t)f_c^2(t) \bigg]\ud t,
\end{multline} 
where $h_{ij}(t) :=  \E [\hat{V}_t^i\hat{Y}_t^j]$ are the mixed second moments of $(\hat V_t, \hat Y_t)$, with \((i,j) \in \{(0,2),(1,1),(2,0)\}\).
Furthermore, the moments can be computed via the coupled ODE system:
\begin{equation}\label{eqn:ODE_moments}
    \begin{cases}
        \dot{h}_{20}(t) = -2\frac{\mu_t}{r_\alpha}h_{20}(t) - 2\frac{\eta_t}{r_\alpha}h_{11}(t) + \sigma_B^2, \\
        \dot{h}_{11}(t) = \left(\frac{\lambda}{r_\beta\sigma_W^2}f_c(t) - \frac{\rho_t}{r_\beta} - \frac{\mu_t}{r_\alpha}\right)h_{11}(t) \\
        \quad \quad\quad\quad + \left(1-\frac{\eta_t}{r_\beta}\right)h_{20}(t) - \frac{\eta_t}{r_\alpha}h_{02}(t), \\
        \dot{h}_{02}(t) = 2\left(1 - \frac{\eta_t}{r_\beta}\right)h_{11}(t) \\
        \quad \quad\quad\quad + 2\left(\frac{\lambda}{r_\beta\sigma_W^2}f_c(t) - \frac{\rho_t}{r_\beta}\right)h_{02}(t) + \sigma_W^2,
    \end{cases}
\end{equation}
with given initial conditions
$$   h_{20}(0) = \E V_0^2,\quad h_{11}(0) = \E V_0Y_0,\quad h_{02}(0) = \E Y_0^2.$$

Intuitively, if the red team chooses $f_c\equiv 0$, then hypotheses $H_0$, $H_1$ coincide, effectively causing the blue team to voluntarily abandon strategic misdirection.
Therefore, we term the red team's control $f_c \equiv 0$ as the trivial optimizer, which is proved to be the local minimizer of the optimization problem~\eqref{eqn:red_min_obj_unreg}.
This result is presented as Theorem~\ref{thm:trival_opt}.

\begin{theorem} [Trivial Optimizer]\label{thm:trival_opt}
Denote by \(L^2([0,T];\R)\) the collection of square-integrable real-valued functions on \([0,T]\), and \(\delta A(t,\cdot)(f_c)(u)\) the first variation of \(A\) in \(f_c\) with respect to the variation \(u\in L^2([0,T];\R)\), where \(A\) is a real-valued function of \(f_c\). If the following conditions hold:
\begin{enumerate}
    \item $\frac{1}{2}r_\beta\sigma_W^2<\lambda\leq r_\beta\sigma_W^2.$
    \item \(\mu_t\) is first-order differentiable, and \(\eta_t,\rho_t,h_{11}(t),h_{02}(t)\) are second-order differentiable in \(f_c\) in the Gateaux sense, for any \(t\in[0,T]\).
    \item 
    \(\delta A(t,\cdot)(f_c\equiv 0)(u)\) is differentiable in \(t\), and furthermore, \(\frac{\ud}{\ud t}[\delta A(t,\cdot)(f_c\equiv 0)(u)] = \delta \dot{A}(t,\cdot)(f_c\equiv 0)(u)\), for any \(t\in[0,T],\ u\in L^2([0,T];\R)\).
\end{enumerate}
Then $f_c\equiv 0$ is the local minimizer of the red team's optimization problem~\eqref{eqn:red_min_obj_unreg}.
\end{theorem}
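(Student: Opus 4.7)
The plan is to apply a second-order variational criterion on the admissible space $L^2([0,T];\R)$: show $J_{\text{red}}(0)=0$, that the first variation $\delta J_{\text{red}}(0)(u)$ vanishes in every direction $u$, and that the second variation $\delta^2 J_{\text{red}}(0)(u,u)$ is strictly positive for every nontrivial $u$. The starting point is expression~\eqref{eqn:exp_log}; since Corollary~\ref{cor:baseline} gives $\eta_t\equiv\rho_t\equiv 0$ at $f_c\equiv 0$, the integrand vanishes pointwise and hence $J_{\text{red}}(0)=0$.

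The key reduction is to show that the first Gateaux variations $\delta\eta_t(0)(u)$ and $\delta\rho_t(0)(u)$ both vanish identically in $t$. Differentiating the ODEs for $\eta_t$ and $\rho_t$ in~\eqref{eqn:ODE_system} using conditions~(2) and~(3) to exchange variation with the time derivative, every term on the right-hand side of $\dot\rho_t$ is at least quadratic in $(\eta_t,\rho_t,f_c(t))$, so $\delta\dot\rho_t(0)(u)\equiv 0$; combined with $\delta\rho_T(0)(u)=0$ from the terminal condition $\rho_T=0$, this forces $\delta\rho_t(0)(u)\equiv 0$. Substituting back, the linearized equation for $\eta_t$ reduces to $\delta\dot\eta_t(0)(u) = (\mu_t/r_\alpha)\,\delta\eta_t(0)(u)$ with zero terminal value, giving $\delta\eta_t(0)(u)\equiv 0$. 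Writing the integrand of~\eqref{eqn:exp_log} as $A_1(t,f_c)f_c(t)+A_2(t,f_c)f_c(t)^2$ with $A_1=-(\eta_t h_{11}(t)+\rho_t h_{02}(t))/r_\beta$ and $A_2=(\lambda/(r_\beta\sigma_W^2)-1/2)\,h_{02}(t)$, every term in the first variation at $f_c\equiv 0$ carries either $A_1(t,0)=0$ or an explicit factor $f_c(t)=0$, so $\delta J_{\text{red}}(0)(u)=0$ for every $u$.

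For the second variation, contributions involving $\delta^2\eta_t$, $\delta^2\rho_t$, or $\delta^2 A_1$ all carry a factor $f_c(t)=0$ and drop out; the mixed term $2\delta A_1(t,0)(u)\,u(t)$ also vanishes because $\delta A_1(t,0)(u)$ is a sum of four products each containing either $\eta_t\equiv 0$, $\rho_t\equiv 0$, $\delta\eta_t\equiv 0$, or $\delta\rho_t\equiv 0$. Only the explicit $A_2(t,0)\,u(t)^2$ term survives, giving
\begin{equation}
\delta^2 J_{\text{red}}(0)(u,u) = \frac{2}{\sigma_W^2}\left(\frac{\lambda}{r_\beta\sigma_W^2}-\frac{1}{2}\right)\int_0^T h_{02}(t)\,u(t)^2\,\ud t.
\end{equation}
Under condition~(1) the prefactor is strictly positive, and at $f_c\equiv 0$ the $h_{02}$-equation in~\eqref{eqn:ODE_moments} reduces to $\dot h_{02}(t)=2h_{11}(t)+\sigma_W^2$; together with the diffusive identity $h_{02}(t)=\E[\hat Y_t^2]>0$ for $t>0$ (inherited from $\sigma_W>0$), this yields $\delta^2 J_{\text{red}}(0)(u,u)>0$ for every nontrivial $u\in L^2([0,T];\R)$, so $f_c\equiv 0$ is a local minimizer.

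The main obstacle is the bookkeeping of paragraph two: correctly tracking which first-order Gateaux variations of the coupled ODE system $(\mu_t,\eta_t,\rho_t,h_{11}(t),h_{02}(t))$ actually survive at $f_c\equiv 0$. Condition~(3) is essential for commuting the variation with the time derivative so the linearized ODEs can be integrated explicitly, and condition~(2) supplies the Gateaux regularity needed to take the second variation. A secondary technical subtlety is upgrading the pointwise positivity of $\delta^2 J_{\text{red}}(0)$ to a coercive lower bound of the form $\delta^2 J_{\text{red}}(0)(u,u)\geq c\|u\|_{L^2}^2$ on a subinterval of $[0,T]$, which follows from a uniform positive lower bound for $h_{02}(t)$ away from $t=0$ provided by the constant forcing $\sigma_W^2>0$.
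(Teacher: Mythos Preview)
Your proposal is correct and follows essentially the same route as the paper: both arguments compute the first and second variations of $J_{\text{red}}$ at $f_c\equiv 0$, differentiate the ODE system~\eqref{eqn:ODE_system} in $f_c$ to show $\delta\eta_t(0)(u)\equiv\delta\rho_t(0)(u)\equiv 0$, and arrive at the identical expression $\delta^2 J_{\text{red}}(0)(u,u)=\tfrac{2}{\sigma_W^2}\bigl(\tfrac{\lambda}{r_\beta\sigma_W^2}-\tfrac{1}{2}\bigr)\int_0^T h_{02}(t)\,u(t)^2\,\ud t$. Your notation $A_1,A_2$ corresponds to the paper's $-G$ and the $h_{02}$-coefficient, and your explicit justification that $h_{02}(t)=\E[\hat Y_t^2]>0$ for $t>0$ and your remark on coercivity in $L^2$ are points the paper leaves implicit.
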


\begin{proof}
Recall that the red team minimizes the objective~\eqref{eqn:red_min_obj_unreg} with respect to \(f_c\) in its unregularized optimization problem.
As the first step, we clarify functional dependencies, pointing out that the solutions to both systems~\eqref{eqn:ODE_system} and~\eqref{eqn:ODE_moments} depend on $f_c$.
In the following context, we explicitly specify such dependence whenever necessary, e.g., $\mu_t$ can be equivalently denoted as $\mu(t, f_c)$. 

The proof utilizes tools from the calculus of variation. The main idea is to show that the first variation in \(f_c\) is constantly zero, whereas the second variation is strictly positive, when both variations are evaluated at \(f_c\equiv 0\).
Due to the differentiability in the Gateaux sense, all the first and second variations mentioned in the proof are finite.

For simplicity of the notation, define 
\begin{equation}
\label{eqn:def_G}
G(t,f_c) := \frac{\eta(t,f_c)}{r_\beta}h_{11}(t,f_c) + \frac{\rho(t,f_c)}{r_\beta}h_{02}(t,f_c).
\end{equation}
By~\eqref{eqn:red_min_obj_unreg}--\eqref{eqn:exp_log}, it follows that
\begin{multline}
    J_{\text{red}}(f_c) = \frac{1}{\sigma_W^2}\int_0^T \Bigg[-G(t,f_c) f_c(t)\\
    + \left(\frac{\lambda}{r_\beta \sigma_W^2}-\frac{1}{2}\right)h_{02}(t,f_c)f_c^2(t)\Bigg]\ud t.
\end{multline}

Calculate the first variation of \(J_{\text{red}}\) in $f_c$, with respect to the variation $u \in L^2([0,T];\R)$:
\begin{multline}
    \delta J_\red(f_c)(u)= \frac{1}{\sigma_W^2} \Bigg[-\int_0^T [\delta G(t,\cdot)(f_c)(u)]f_c(t)\ud t \\
    + \Big(\frac{\lambda}{r_\beta \sigma_W^2}-\frac{1}{2}\Big)\Big(\int_0^T [\delta h_{02}(t,\cdot)(f_c)(u)]f_c^2(t)\ud t \\
    + 2\int_0^T h_{02}(t,f_c) f_c(t)\, u(t)\ud t\Big)- \int_0^T G(t,f_c)\, u(t)\ud t \Bigg].
\end{multline}
Evaluate both sides at \(f_c\equiv 0\).
By Corollary~\ref{cor:baseline}, $G(t, f_c\equiv 0)= 0,\ \forall t\in[0,T]$, which implies $\delta J_\red (f_c\equiv 0)(u) = 0,\ \forall u\in L^2([0,T];\R)$.
Hence $f_c\equiv 0$ is a critical point of \eqref{eqn:red_min_obj_unreg}. 

Calculate the second variation of \(J_{\text{red}}\) in $f_c$, with respect to variations $ u,v \in L^2([0,T];\R)$: 
\begin{multline}
    \delta^2 J_{\text{red}}(f_c)(u,v)
    = \frac{1}{\sigma_W^2} \Bigg[-\int_0^T [\delta^2 G(t,\cdot)(f_c)(u,v)]f_c(t)\ud t \\
    - \int_0^T [\delta G(t,\cdot)(f_c)(u)]v(t)\ud t - \int_0^T [\delta G(t,\cdot)(f_c)(v)] u(t)\ud t \\
    + \Big(\frac{\lambda}{r_\beta \sigma_W^2}-\frac{1}{2}\Big)
    \Big(2\int_0^T [\delta h_{02}(t,\cdot)(f_c)(u)]f_c(t)v(t)\ud t \\
    + \int_0^T [\delta^2 h_{02}(t,\cdot)(f_c)(u,v)]f_c^2(t)\ud t \\
    +2\int_0^T h_{02}(t,f_c)v(t)u(t)\ud t \\
    + 2 \int_0^T [\delta h_{02}(t,\cdot)(f_c)(v)]f_c(t)u(t)\ud t\Big)\Bigg].
\end{multline}
Evaluating both sides at $u\equiv v$ and $f_c \equiv 0$ yields
\begin{multline}
    \label{eqn:second_var}
    \delta^2 J_{\text{red}}(f_c\equiv 0)(u,u)\\
    =\frac{2}{\sigma_W^2}\Bigg[- \int_0^T [\delta G(t,\cdot)(f_c\equiv 0)(u)]u(t)\ud t\\
    +\Big(\frac{\lambda}{r_\beta \sigma_W^2}-\frac{1}{2}\Big)\int_0^T h_{02}(t,f_c\equiv 0) u^2(t)\ud t\Bigg].
\end{multline}

On the other hand, for fixed \(t\in[0,T]\), taking the first variation in \(f_c\) on both sides of the ODE system~\eqref{eqn:ODE_system} with respect to the variation \(u\in L^2([0,T];\R)\) yields 
\begin{equation}
\label{eqn:function_ODE}
\begin{cases}
\begin{aligned}
&\delta \dot{\mu}
= \frac{2}{r_\alpha}\mu(\delta \mu) - 2(\delta \eta) + \frac{2}{r_\beta}\eta(\delta \eta) \\
\end{aligned} \\
\begin{aligned}
\delta \dot{\eta} &= \frac{1}{r_\alpha}\Bigl(\mu(\delta \eta) + \eta(\delta \mu)\Bigr) + \frac{1}{r_\beta}\Bigl(\rho(\delta \eta) + \eta(\delta \rho)\Bigr) - (\delta\rho)\\
&-\frac{\lambda}{r_\beta\sigma_W^2}\Bigl(f_c(t)\,(\delta \eta) + \eta \,u(t)\Bigr)\\
\end{aligned} \\
\begin{aligned}
\delta \dot{\rho}
&= \frac{2}{r_\alpha}\eta(\delta \eta)  + \frac{2}{r_\beta}\rho(\delta \rho) -\frac{2\lambda}{r_\beta\sigma_W^2}\Bigl(f_c(t)\,(\delta \rho) + \rho \,u(t)\Bigr) \\
& - 2\Bigl(\frac{\lambda}{\sigma_W^2} - \frac{\lambda^2}{r_\beta\sigma_W^4}\Bigr)f_c(t)u(t)
\end{aligned}
\end{cases},
\end{equation}
where the shorthand notations \( A := A(t,f_c)\) and \(\delta A := \delta A(t,\cdot)(f_c)(u)\) are adopted for objects \(A\in\{\mu,\eta,\rho\}\).
For given \(f_c\) and \(u\), \eqref{eqn:function_ODE} is a system of function-valued ODEs, with terminal conditions \(\delta \mu(T,\cdot)(f_c)(u)  =  \delta \eta(T,\cdot)(f_c)(u) = \delta \rho(T,\cdot)(f_c)(u) = 0\).

Evaluate all the equations at \(f_c\equiv 0\).
By Corollary~\ref{cor:baseline}, \(\eta(t,f_c\equiv 0)= \rho(t,f_c\equiv 0) = 0,\ \forall t\in[0,T]\), which implies 
\begin{equation}
        \begin{cases}
            \begin{aligned}
            \delta \dot{\mu}|_{f_c\equiv 0}
            = \tfrac{2}{r_\alpha}\mu(t,f_c\equiv 0)(\delta \mu)|_{f_c\equiv 0} - 2(\delta \eta)|_{f_c\equiv 0}
            \end{aligned}\\
            \begin{aligned}
            \delta \dot{\eta}|_{f_c\equiv 0} &= \tfrac{1}{r_\alpha}\mu(t,f_c\equiv 0)(\delta \eta)|_{f_c\equiv 0} -(\delta \rho)|_{f_c\equiv 0}
            \end{aligned}\\
            \begin{aligned}
            &\delta \dot{\rho}|_{f_c\equiv 0} \equiv 0
            \end{aligned}
        \end{cases}.
    \end{equation}
Interchanging the differentiation in \(t\) and the first variation in \(f_c\) yields \((\delta \eta)|_{f_c\equiv 0} \equiv (\delta \rho)|_{f_c\equiv 0} \equiv 0,\ \forall u\in L^2([0,T];\R),t\in[0,T]\).
By the definition~\eqref{eqn:def_G}, we get
\begin{equation}
    \label{eqn:G_f_c}
    \delta G(t,\cdot)(f_c\equiv 0)(u) = 0, \ \forall u\in L^2([0,T];\R),t\in[0,T].
\end{equation}
Combining~\eqref{eqn:second_var} and~\eqref{eqn:G_f_c} yields \(\delta^2 J_{\text{red}}(f_c\equiv 0)(u,u)> 0\), \(\forall u \in L^2([0,T];\R),u\not\equiv 0 \ \mathrm{a.e.}\), which concludes the proof.
\end{proof}

\smallskip

While the result for the trivial optimizer $f_c \equiv 0$ is mathematically consistent with the motivation of the red team in the Stackelberg game, it is unrealistic for the blue team to actually adopt it, since \(f_c\equiv 0\) does not induce any misdirection, which runs against the blue team's motivation.
Consequently, it is necessary to add a penalty term that models the blue team's level of skepticism towards the manipulated $f_c$.
With this regularization effect introduced, the red team's objective is formulated as:
    \begin{equation}\label{def:Obj-red}
        J_{\text{red}}(f_c) := \E[\log \hat{L}_T] + \frac{\lambda_{\text{reg}}}{\sigma_W^2} \mathcal{P}(f_c),
    \end{equation}
where $\lambda_{\text{reg}}>0$ characterizes the regularization intensity. 

To model the penalty term $\mathcal{P}(f_c)$, we introduce a trust region (proximity) constraint on $f_c$.
Initially, the blue team uses $f^{\text{initial}}_c$.
After the manipulation of the red team's agents, the blue team adopts a new misdirection pattern induced by \(f_c\) but simultaneously increases its skepticism, which is proportional to the difference between $f^{\text{initial}}_c$ and $f_c$. The trust region constraint thus penalizes deviations from $f^{\text{initial}}_c$ to $f_c$, ensuring a controlled and credible shift. Mathematically, we choose
\begin{multline}\label{def:reg}
    \mathcal{P}[f_c] = \int_0^T (f_c(t)- f_c^{\text{initial}}(t))^2 \ud t \\
    \text{ or } -\int_0^T f_c^{\text{initial}}(t)\log \frac{f_c(t)}{f_c^{\text{initial}}(t)} \ud t.
\end{multline}
The first is a quadratic penalty, and the second, inspired by KL-divergence, treats both \(f_c^{\text{initial}}\) and \(f_c\) as unnormalized densities on $[0,T]$.

Since $ J_{\text{red}}(f_c)$ does not admit a closed-form minimizer, Section~\ref{sec:IV-B} explores three numerical approaches to compute the optimal $f_c$ and presents the numerical results.


\section{Numerical Experiments}\label{sec:results}

In this section, we present numerical algorithms and experiments on deceptive and counter-deceptive strategies. Specifically, we focus on the blue team's optimal misdirection strategies $\hat\beta_t$ (discussed in Section~\ref{sec:model}) and the red team's optimal misdirection pattern $\hat f_c$ (discussed in Section~\ref{sec:game}). 
Finally, by combining both teams' solutions, we present the multiple-round interaction within the Stackelberg game, showing how the red team gradually weakens the misdirection effect of the blue team through the manipulation of \(f_c\).

\subsection{Blue Team's Optimal Control $\hat \beta_t$}\label{sec:IV-A}

Focusing on the blue team's control problem outlined in Section~\ref{sec:IIC}, we plot trajectories of the blue team's velocity $\hat{V}_t$, observed position $\hat{Y}_t$, and optimal controls $\hat{\alpha}_t$, $\hat{\beta}_t$.
All subplots in Figures~\ref{fig:blue_test1}--\ref{fig:blue_test2} share the set of model parameters:
\begin{center}
    $T = 1, \sigma_B = \sigma_W = 0.25, V_0 = 2, Y_0 = 4, r_\alpha = 1,r_\beta = 10$\\
    $r_v = 1,t_v = 1,\BAR{v}_T = 1,\BAR{v}(t) = 2 - t,f_d\equiv 0.$
\end{center}

\begin{figure}[!htbp]
    \centering
    \includegraphics[width=\linewidth]{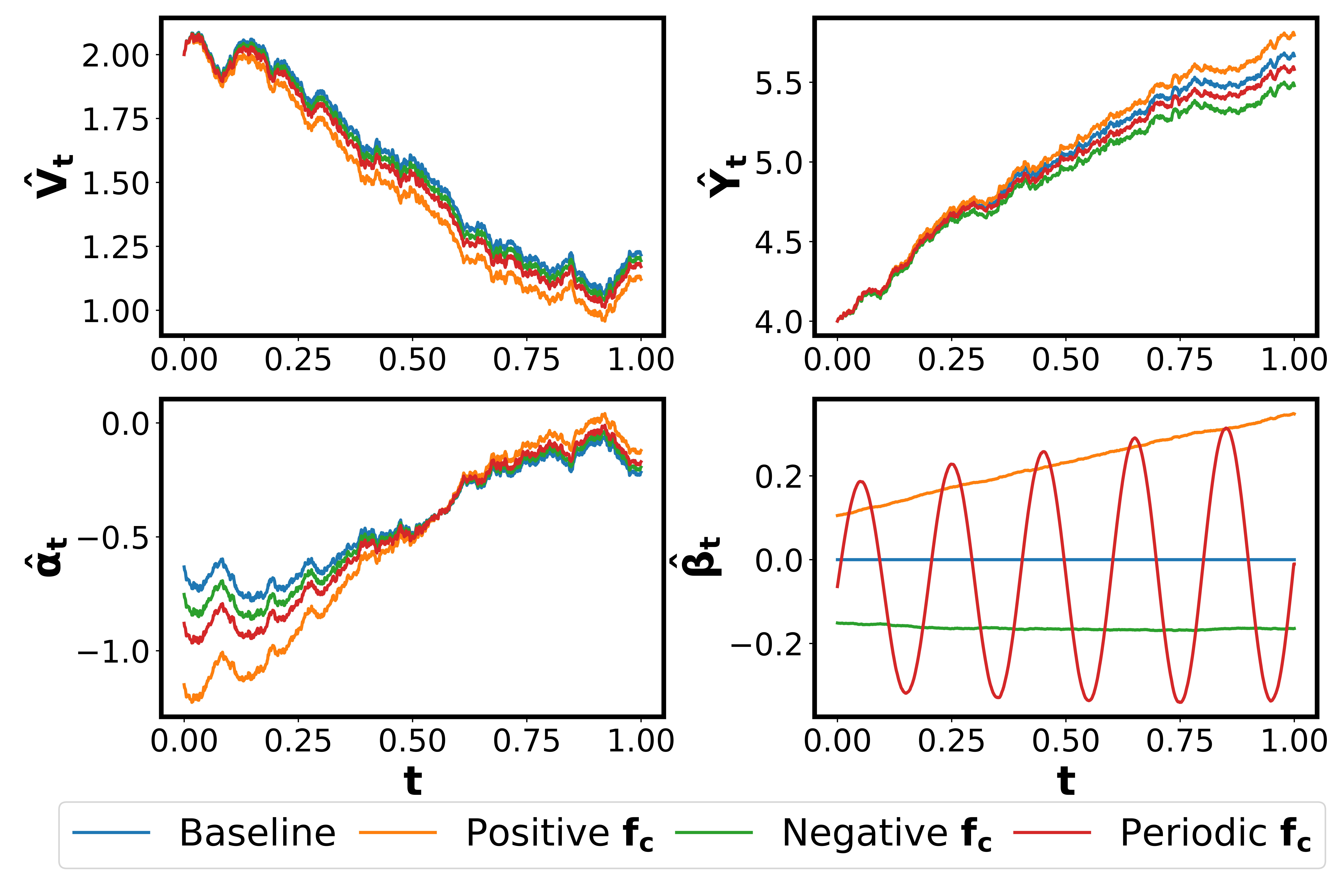}
    \caption{Comparisons of the optimal trajectories \eqref{eqn:velocity}--\eqref{eqn:position} and controls \eqref{eqn:solution} with \(\lambda = 0.075\) across different choices of \(f_c\): baseline \(f_c\equiv 0\), positive \(f_c\equiv 0.5\), negative \(f_c\equiv -0.25\), and periodic \(f_c(t) = 0.5\sin(10\pi t)\).
    }\label{fig:blue_test1}
\end{figure}

\begin{figure}[!htbp]
    \centering
    \includegraphics[width=\linewidth]{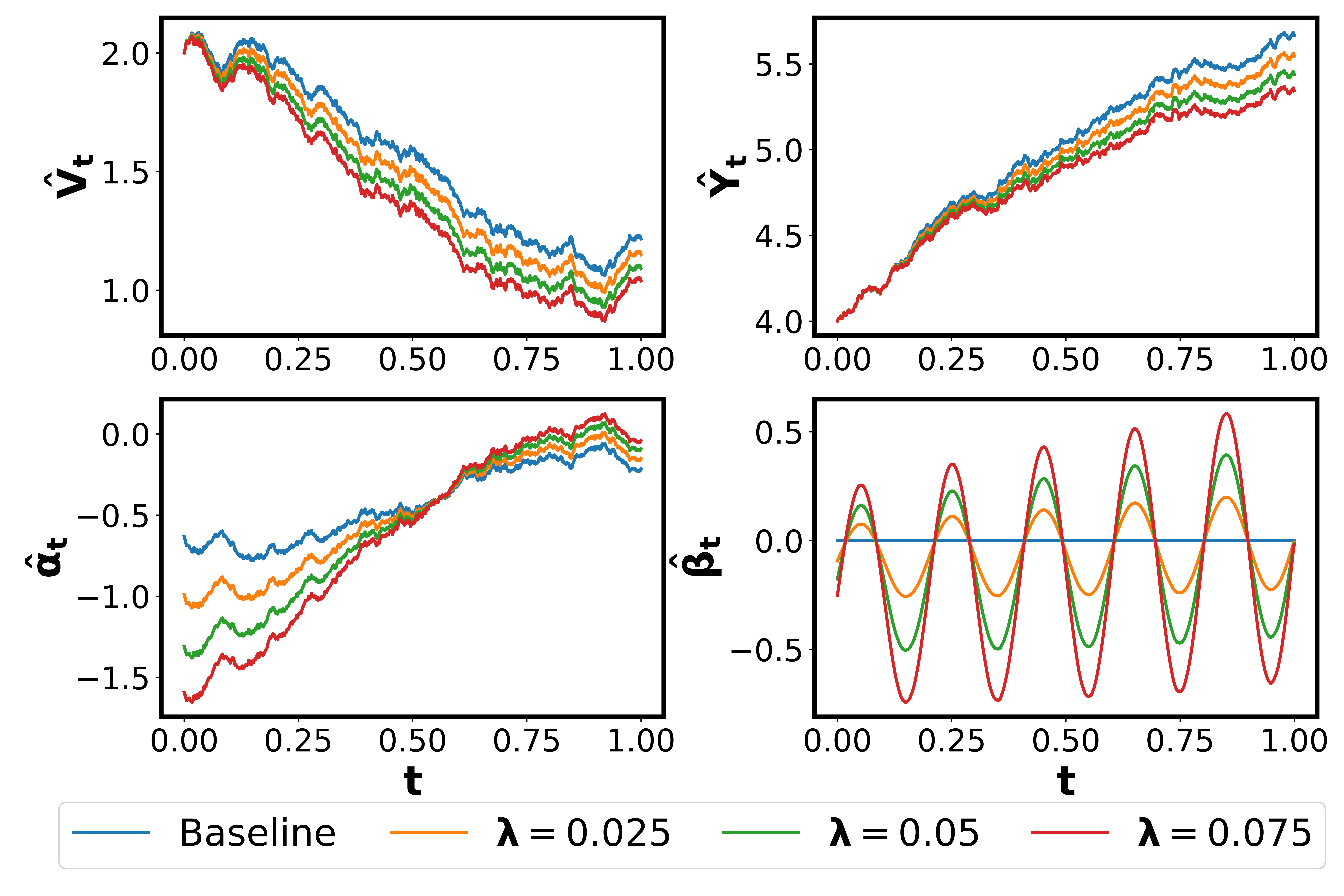}
    \caption{Comparisons of the optimal trajectories \eqref{eqn:velocity}--\eqref{eqn:position} and controls \eqref{eqn:solution} with \(f_c(t) = \sin(10\pi t)\) across different values of \(\lambda\).}
    \label{fig:blue_test2}
\end{figure}

Figure~\ref{fig:blue_test1} illustrates how different choices of \( f_c \) affect the blue team’s optimal trajectories. The baseline case \( f_c \equiv 0 \) represents a scenario where no intentional perturbation is introduced (c.f. Corollary~\ref{cor:baseline}).
A constant positive \(f_c\) perturbs the position trajectory upward, while a constant negative \(f_c\) perturbs the position trajectory downward.
The periodic \( f_c\), on the other hand, introduces oscillatory deviations, which could potentially obscure the true intention of the blue team in a dynamic environment.

Figure~\ref{fig:blue_test2} examines the impact of varying \(\lambda\), the intensity of strategic misdirection, while fixing \( f_c \) to be periodic. As \(\lambda\) increases, the blue team places greater emphasis on misleading the red team, resulting in more pronounced deviations from the baseline trajectory. We use Monte Carlo simulations with 10,000 paths to quantify the trade-off: $J^{\text{primary}}(\hat\alpha,\hat\beta)$ increases from $0.33$ at $\lambda=0$ to $0.44$ at $\lambda = 0.025$, $0.78$ at $\lambda = 0.05$ and $1.32$ at $\lambda=0.075$, while $\E[\log \hat L_T]$ improves from $-96.98$ to $-87.26, -78.14, -69.55$.
This highlights the trade-off between control effort and deception effectiveness: higher \(\lambda\) values lead to more aggressive strategic misdirection, at the cost of fulfilling the primary task in a less efficient way.

\subsection{Red Team's Optimal Control $\hat f_c$}\label{sec:IV-B}

To compute the red team's regularized optimal control $\hat f_c$ in \eqref{def:Obj-red}, we examine three algorithms: a fixed point iteration (FPI), a neural network-based method  (NN), and the forward-backward sweep method (FBS). Using these methods, we present experiments on the optimal control for both regularization choices in~\eqref{def:reg}.

FPI starts with an initial guess $f_c^{(0)}$. In the $i$-th iteration, the systems  \eqref{eqn:ODE_system} and \eqref{eqn:ODE_moments} are solved with $f_c$ replaced by $f_c^{(i)}$. The optimization in \eqref{def:Obj-red} is then performed using the obtained $(\mu, \eta, \rho)$ and $(h_{20}, h_{11}, h_{02})$, yielding $f_c^{(i+1)}$. The process iterates until convergence. We outline the procedure in Algorithm \ref{alg:FPI}, while leaving the technical details in the dependence of \(f_c^{(i+1)}\) on \(f_c^{(i)}\) to Appendix~\ref{app:alg}.

\begin{algorithm}
\caption{Fixed Point Iteration (FPI)}\label{alg:FPI}
\KwIn{Initial guess $f_c^{(0)}$}

\(f_c = f_c^{(0)}\)\;
\While{not converge}{
    Solve \eqref{eqn:ODE_system}, \eqref{eqn:ODE_moments} for $(\mu_t, \eta_t, \rho_t)$ and $(h_{20}, h_{11}, h_{02})$\;
    Update \(f_c\) as the minimizer of~\eqref{def:Obj-red} given the obtained $(\mu_t, \eta_t, \rho_t)$ and $(h_{20}, h_{11}, h_{02})$\;
}

\KwOut{The optimizer $f_c$}
\end{algorithm}

In the NN method, we directly parameterize $f_c$ using a feedforward neural network that takes $t\in [0,T]$ as input and outputs a real number, as inspired by \cite{han2016deep}.
Within each training epoch, we discretize the time horizon and simulate the ODE systems \eqref{eqn:ODE_system} and \eqref{eqn:ODE_moments} using the Euler scheme.
This enables the computation of $J_{\text{red}}$, which is set as the loss for updating neural network parameters. 
The complete procedure is outlined in Algorithm~\ref{alg:NN_method}.

\begin{algorithm}
\caption{Neural Network Method (NN)}\label{alg:NN_method}
\KwIn{Initial network parameters $\theta^{(0)}$}

Initialize network parameters $\theta =  \theta^{(0)}$\;
\While{not converge}{
    Simulate~\eqref{eqn:ODE_system} and~\eqref{eqn:ODE_moments} using Euler schemes\;

    Compute $J_{\text{red}}$ based on simulated trajectories\;

    Update $\theta$ with loss  $J_{\text{red}}$\;
}
\KwOut{A trained neural network that parameterizes the optimal \(f_c\)}
\end{algorithm}

The FBS method \cite[Ch.~4]{lenhart2007optimal} treats the red team's optimization problem as a deterministic optimal control problem.
Leveraging Pontryagin’s maximum principle \cite{mangasarian1966sufficient}, one derives the optimal \(f_c\) by minimizing the Hamiltonian, which is further characterized by the solutions to a system of forward-backward ODEs (FBODEs).
FBS then serves as a specific technique that solves the FBODEs, by alternating between solving the state equations and adjoint equations. 
In particular, convergence guarantees of FBS are established when the coefficients of the FBODEs satisfy certain boundedness and Lipschitz continuity conditions \cite{mcAsey2012convergence}. We outline the procedure in Algorithm~\ref{alg:FBS} and leave the derivations of the FBODEs to Appendix~\ref{app:alg}.

\begin{algorithm}
\caption{Forward-Backward Sweep (FBS)}
\label{alg:FBS}
\DontPrintSemicolon
\KwIn{Initial guess $f_c^{(0)}$}
$f_c = f_c^{(0)}$\;
\While{not converge}{
  Solve the state equations with the given $f_c$\;
  Solve the adjoint equations with the given $f_c$ and the solutions to the state equations\;
  Update \(f_c\) as the minimizer of the Hamiltonian\;
}
\Return The optimizer $f_c$\;
\end{algorithm}

Figure~\ref{fig:redtest9&11} presents the optimal $\hat f_c$ computed using different algorithms and penalty terms $\mathcal{P}(f_c)$. The following parameter values are used \footnote{Such choices align with the simplified LQ model, in which we have set \(f_d\equiv \BAR{v}\equiv 0\) and \(\BAR{v}_T = 0\).}:
\begin{center}
$T = 0.1, \sigma_B = \sigma_W = 0.1, V_0 = 1, Y_0 = 2, r_\alpha = 1,r_\beta = 10$\\
    $r_v = 1,t_v = 1,\BAR{v}_T = 0,\BAR{v}(t) \equiv 0,f_d\equiv 0.$
\end{center}

\begin{figure}[htbp]
    \centering
    \includegraphics[width=1.0\linewidth]{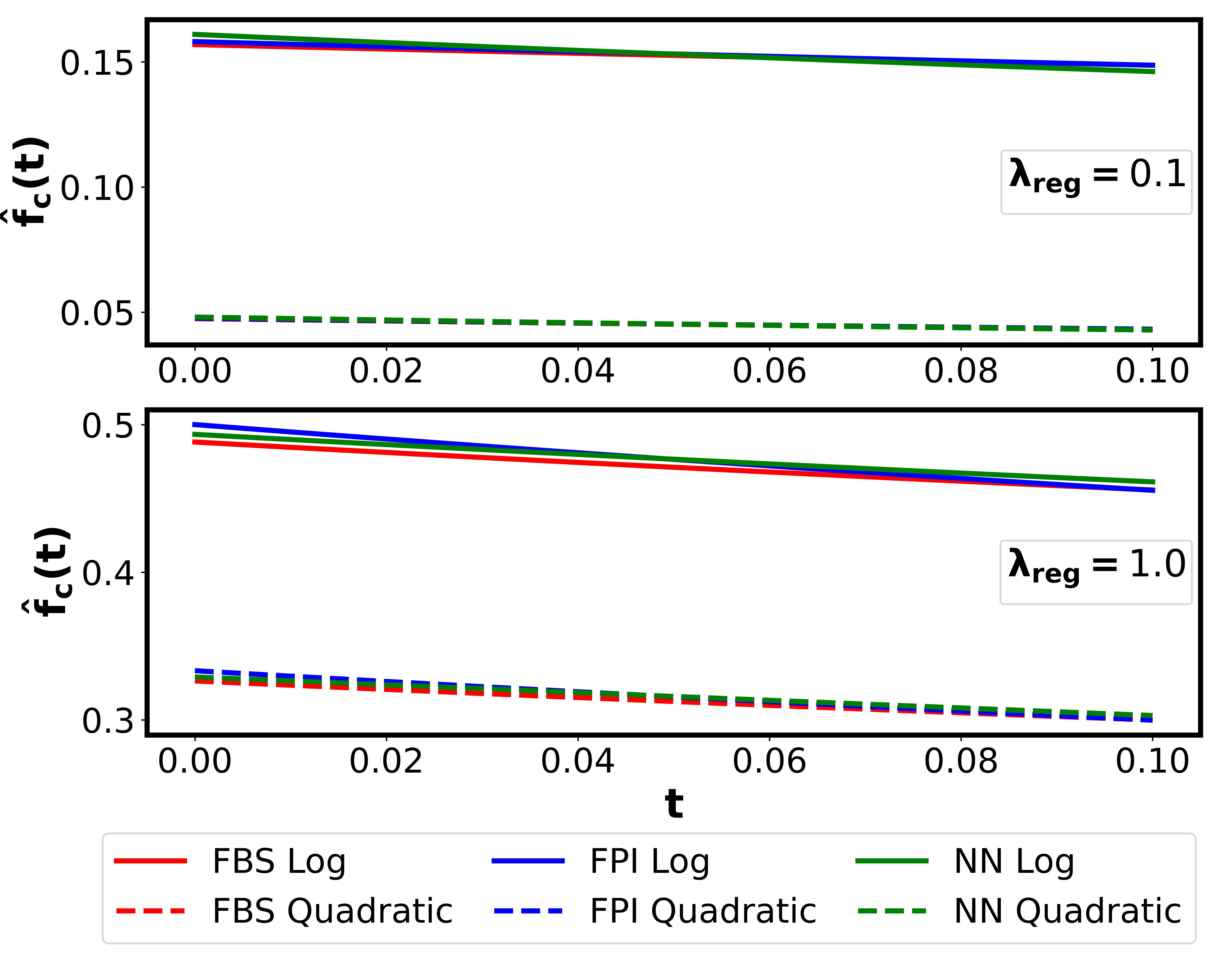}
    \caption{Plots of the optimal $\hat f_c$ across different algorithms, penalties, and values of \(\lambda_{\text{reg}}\).}
    \label{fig:redtest9&11}
\end{figure}

For both the penalty intensities $\lambda_{\text{reg}}$ and the choices of $\mathcal{P}(f_c)$, all three methods (FPI, NN and FBS) produce largely consistent results. 
A key observation is that increasing $\lambda_{\text{reg}}$ shifts $\hat f_c$ closer to $f_c^{\text{initial}} \equiv 1$ but also increases the value of $\E[\log \hat L_T]$, affirming the trade-off between counter-deception and avoiding skepticism. Additionally, the logarithmic penalty consistently yields results closer to 1, aligning with its steeper decay near zero.

Numerically, $\E[\log \hat L_T]$ under $\hat f_c$ is sensitive to both the penalty and $\lambda_{\text{reg}}$. With the logarithmic penalty, values increase from about $0.50$ at $\lambda_{\text{reg}}=0.1$ to $5.00$ at $\lambda_{\text{reg}}=1.0$, while the quadratic penalty yields smaller increases (from $0.04$ to $2.17$). For reference, the unregularized baseline $f_c^{\text{initial}}\equiv1$ gives $\E[\log \hat L_T]=23.21$, regardless of penalty. These results confirm that by tuning $f_c$, the red team can counteract deception while controlling skepticism, thereby shaping the blue team’s misdirection strategies.

\vspace{1.5em}

\subsection{Red-blue Interaction in Multiple Rounds}\label{sec:IV-C}

Combining optimal controls in Sections~\ref{sec:IV-A}--\ref{sec:IV-B} facilitates a complete understanding of the red-blue interaction within the Stackelberg game that occurs for multiple rounds.

Under the simplified version of the LQ model (cf. Section~\ref{sec:game}), the blue team starts with a given pattern \(f_c\) within each round, which induces the optimal controls \(\hat{\alpha},\hat{\beta}\).
Setting \(f_c^{\text{initial}}\) as the \(f_c\) currently adopted by the blue team, the red team calculates and instills \(\hat{f}_c\), which serves as the blue team's misdirection pattern in the next round.

\begin{figure}[htbp]
    \centering
    \includegraphics[width=1.0\linewidth]{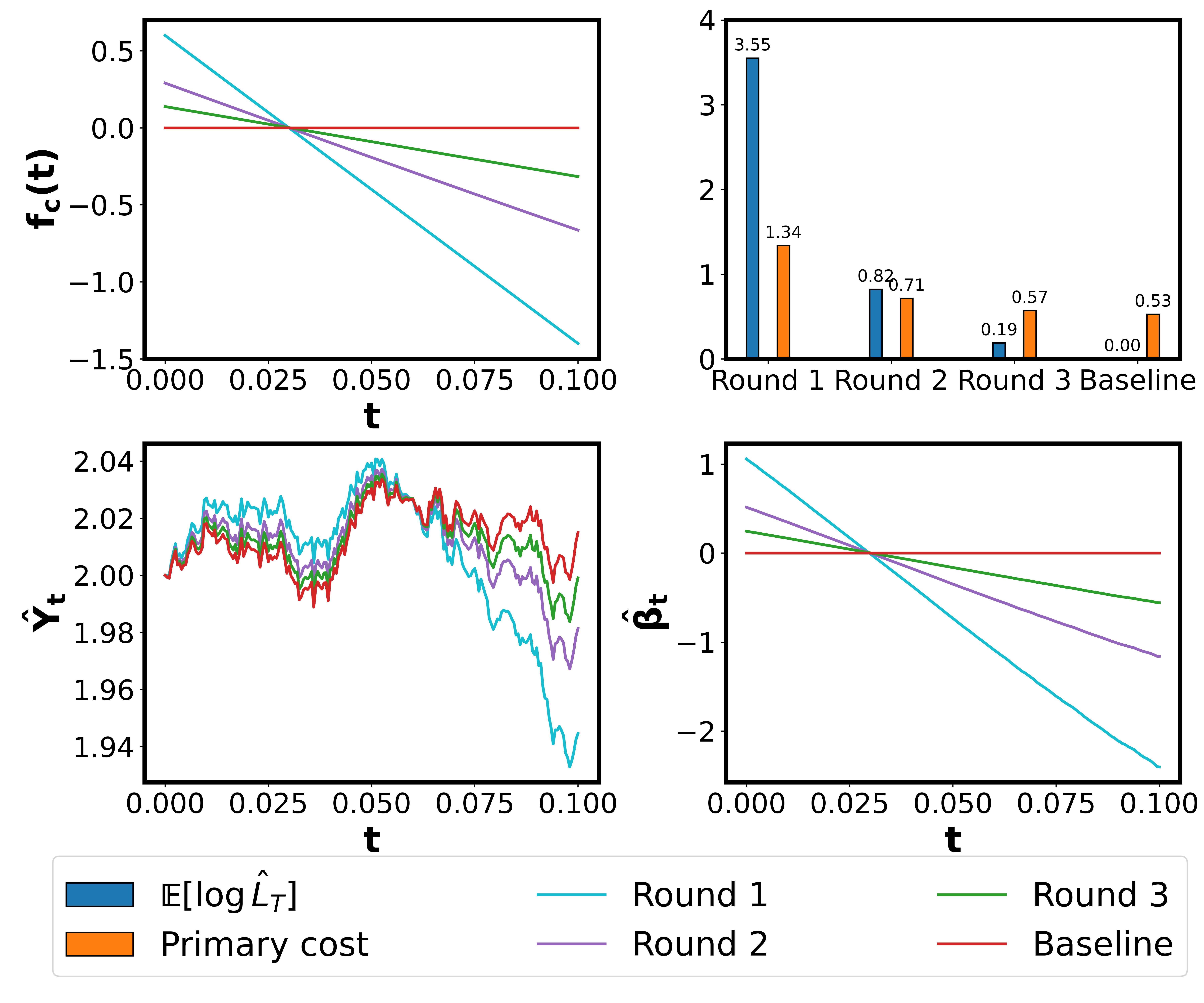}
    \caption{Comparisons of \(f_c\), \(J^{\text{primary}}(\hat{\alpha},\hat{\beta})\), \(\E[\log\hat{L}_T]\), optimal state trajectories~\eqref{eqn:position} and controls~\eqref{eqn:solution} across multiple rounds of red-blue interaction within the Stackelberg game.}
    \label{fig:red_blue}
\end{figure}

Figure~\ref{fig:red_blue} demonstrates the choice of \(f_c\), \(J^{\text{primary}}(\hat{\alpha},\hat{\beta})\), \(\E[\log\hat{L}_T]\), and the blue team's state and control trajectories across multiple rounds of the Stackelberg game.
The baseline case stands for \( f_c \equiv 0 \), which aligns with the trivial optimizer in Theorem~\ref{thm:trival_opt}.
As a proof of principle, the red team adopts the NN solver with a quadratic penalty, as validated in Section~\ref{sec:IV-B}.
The following parameter values are used:
\begin{center}
$T = 0.1, \sigma_B = \sigma_W = 0.15, V_0 = 1, Y_0 = 2, r_\alpha = 2,r_\beta = 10$\\
    $r_v = 1,t_v = 1,\BAR{v}_T = 0,\BAR{v}(t) \equiv 0,f_d\equiv 0$\\
    $\lambda=0.2,\lambda_{\text{reg}} = 1.5,f_c(t) = 20(0.03 - t).$
\end{center}

As the game progresses, the blue team's optimal state and control trajectories gradually approach the baseline case, and the misdirection pattern \(f_c\) gets closer to the trivial optimizer.
In round \(1\), the trajectories exhibit strong misdirection, reflected by a large \(\E[\log\hat{L}_T]\), but this comes at the expense of a higher primary cost \(J^{\text{primary}}(\hat{\alpha},\hat{\beta})\).
After 2 rounds,  both metrics are close to their baseline values, illustrating the effectiveness of the red team’s counter-deception measures.

One may wonder whether repeated iterations would drive $f_c$ to zero; we note that such successive reductions depend on the blue team’s willingness to accept the red team’s manipulations in each round, which may not always hold in practice. Developing models that explicitly capture this acceptance dynamic is a worthwhile direction for future research.

\section{Conclusion}\label{sec:discussion}

This paper presents an expanded study of deception and counter-deception in partially observable Stackelberg games, expanding our earlier conference paper~\cite{zhou2025integrating}.
Within a linear-quadratic framework, we model the red team's strategic manipulation and the blue team's response through optimal control under sequential hypothesis testing. 
Compared to \cite{zhou2025integrating}, this paper: (i) establishes a new theoretical result (Theorem~\ref{thm:trival_opt}) that motivates the introduction of regularization in the red team's optimization (cf. Section~\ref{sec:game}); 
(ii) provides detailed derivations of test statistics, numerical algorithms (Appendix~\ref{app:alg}), and hyperparameter settings (see Appendix~\ref{app:hyper});
(iii) illustrates multi-round red–blue interactions through additional numerical experiments (Figure~\ref{fig:red_blue}). 

In contrast to the active inverse learning frameworks of~\cite{ward2023active, kim2025deceptive}, which study how a leader can influence an observer or follower to facilitate accurate inference of latent preferences or reward functions, our model emphasizes how strategic deception and belief shaping can emerge. 

Future research directions include multi-dimensional extensions, where controls, velocities and positions take values in \(\mathbb{R}^d\).
As quantities in the hypotheses become matrix- or vector-valued functions, the likelihood ratio statistic can be computed using multi-dimensional variants of Lemma~\ref{lem:LR} \cite{liptser2013statistics}, without introducing qualitative changes to the structure of the problem.
Other extensions of our current model include more realistic trust region penalties, nonlinear dynamics and hypotheses, multi-agent reinforcement learning for adaptive deception and detection, robustness against evolving adversaries, and large-scale interactions in mean-field games with common noise. 

Although our analysis is presented in a two-player Stackelberg game setting, it also serves as a building block for control over networked dynamical systems.
In such settings, the state may describe a collection of interconnected subsystems coupled through a graph (e.g., consensus or mean-field type interactions), with actions applied locally at selected nodes. The partial observability considered here naturally extends to networked settings, where measurements are available only at a subset of nodes or through aggregated network outputs. From this perspective, our Stackelberg formulation captures adversarial inference and strategic signaling for a networked plant, while a systematic treatment of network architectures and scalable implementations is left for future work.

These extensions aim to deepen the theoretical foundations of deception-aware control and broaden its applicability to complex networked systems.


\appendices

\section{Proofs of Propositions~\ref{prop:LR_calc}--\ref{prop:E_log_L}}
\label{app:LR_calc}

\begin{proof}[Proof of Proposition~\ref{prop:LR_calc}]

Using the notations of Lemma~\ref{lem:LR}, identify \(m = n = 2\), \(\xi_t\) as \((V_t,Y_t)\) under \(H_0\), and \(\eta_t\) as \((V_t,Y_t)\) under \(H_1\).
From dynamics~\eqref{eqn:velocity}--\eqref{eqn:position}, it is clear that
\begin{align}
    &A_t(v,y) = \begin{bmatrix}
        \phi^\alpha(t,v,y) \\ v
    \end{bmatrix}, \
    a_t(v,y) = \begin{bmatrix}
        \phi^\alpha(t,v,y) \\ v + \phi^\beta(t,v,y)
    \end{bmatrix},\\
    &\qquad \qquad \qquad b_t(v,y) = \begin{bmatrix}
        \sigma_B & 0\\
        0 & \sigma_W
    \end{bmatrix}.
\end{align}
Under Assumption~\ref{assu:linear_ctrl}, \(\xi\) and \(\eta\) have linear dynamics with the same initial condition.
Besides, \(b_t\) is always invertible.
Therefore, all conditions in Lemma~\ref{lem:LR} hold.
The likelihood ratio~\eqref{eqn:L_t} follows from a direct substitution.
\end{proof}

\begin{proof}[Proof of Proposition~\ref{prop:E_log_L}]
    Define \(Z_t := \int_0^t (f_c(s)Y_s + f_d(s))\ud W_s\). Since \(\E \QV{Z}{Z}_T<\infty\), it follows that \(\{Z_t\}\) is a martingale with zero mean.
    Taking logarithm and expectation on both sides of \eqref{eqn:L_t} yield
    \begin{align}
        &\E \log L_T = \frac{1}{\sigma_W^2}\E\Bigg[
        \int_0^T \left(f_c(t)Y_t + f_d(t)\right)\ud Y_t\notag\\
        &- \int_0^T V_t\left(f_c(t)Y_t + f_d(t)\right)\ud t \notag - \frac{1}{2}\int_0^T \left(f_c(t) Y_t + f_d(t)\right)^2 \ud t    \Bigg].
    \end{align}
    Substituting the dynamics~\eqref{eqn:position} concludes the proof.
\end{proof}

\section{Detailed Derivations of the Algorithms FPI and FBS in Section~\ref{sec:IV-B}}\label{app:alg}

\subsection{Detailed Derivations of FPI}

Within the $i$-th iteration of FPI, the systems~\eqref{eqn:ODE_system} and~\eqref{eqn:ODE_moments} are numerically solved with $f_c$ replaced by $f_c^{(i)}$, yielding $(\mu^i, \eta^i, \rho^i)$ and $(h^i_{20}, h^i_{11}, h^i_{02})$, where the superscript \(i\) implies the dependence on \(f_c^{(i)}\), i.e., \(\mu^i_t := \mu(t,f_c^{(i)})\).

In the case of a quadratic penalty \(\mathcal{P}[f_c] = \int_0^T (f_c(t)-1)^2 \ud t\), the red team's objective~\eqref{def:Obj-red} has the form
\begin{multline}
    \label{eqn:quad_FPI}
   J_{\red}(f_c) = \frac{1}{\sigma_W^2}\int_0^T \bigg[\left(-\frac{\eta_t}{r_\beta}h_{11}(t) - \frac{\rho_t}{r_\beta}h_{02}(t)\right)f_c(t) \\
    + \left(\frac{\lambda}{r_\beta \sigma_W^2}-\frac{1}{2}\right)h_{02}(t)f_c^2(t) +\lambda_{\REG}(f_c(t)-1)^2 \bigg]\ud t.
\end{multline}
Minimizing the integrand in~\eqref{eqn:quad_FPI} with respect to \(f_c\) pointwisely yields the FPI update \(f_c^{(i+1)} = \tfrac{2 \lambda_\REG r_\beta \sigma_W^2 + \sigma_W^2 \eta^i h_{11}^i + \sigma_W^2 \rho^i h_{02}^i}{2 \lambda_\REG r_\beta \sigma_W^2 - \left(r_\beta \sigma_W^2 - 2 \lambda\right) h_{02}^i}\).

For the logarithmic penalty \(\mathcal{P}[f_c] = -\int_0^T \log f_c(t) \ud t\), the red team's objective~\eqref{def:Obj-red} has the form
\begin{multline}
    \label{eqn:log_FPI}
   J_{\red}(f_c) = \frac{1}{\sigma_W^2}\int_0^T \bigg[\left(-\frac{\eta_t}{r_\beta}h_{11}(t) - \frac{\rho_t}{r_\beta}h_{02}(t)\right)f_c(t) \\
    + \left(\frac{\lambda}{r_\beta \sigma_W^2}-\frac{1}{2}\right)h_{02}(t)f_c^2(t) -\lambda_{\REG}\log f_c(t) \bigg]\ud t.
\end{multline}
Minimizing the integrand in~\eqref{eqn:log_FPI} with respect to \(f_c\) pointwisely yields the FPI update \(f_c^{(i+1)} = \tfrac{\tfrac{1}{r_\beta}(\eta^ih_{11}^i + \rho^ih_{02}^i) + \sqrt{\Delta^i}}{4(\tfrac{\lambda}{r_\beta \sigma_W^2}-\frac{1}{2})h_{02}^i}\), where
\begin{equation}
    \Delta^i := \tfrac{1}{r_\beta^2}(\eta h_{11}^i + \rho^i h_{02})^2 + 8\lambda_\REG \left(\tfrac{\lambda}{r_\beta \sigma_W^2}-\tfrac{1}{2}\right)h_{02}\notag.
\end{equation}

\subsection{Detailed Derivations of FBS}

In the case of a quadratic penalty \(\mathcal{P}[f_c] = \int_0^T (f_c(t)-1)^2 \ud t\), the red team hopes to optimize its control \(f_c\) that minimizes the expected cost~\eqref{eqn:quad_FPI}, subject to the state dynamics~\eqref{eqn:ODE_system} and~\eqref{eqn:ODE_moments}.
Therefore, the red team's optimization can be identified as a deterministic optimal control problem with zero terminal cost.
For the convenience of notations, we denote by \(x :=  (\mu, \eta, \rho, h_{20}, h_{11}, h_{02})\) the state process of this optimal control problem, which follows the dynamics (cf.~\eqref{eqn:ODE_system} and~\eqref{eqn:ODE_moments}):
\begin{equation}\label{eqn:ODE_fbstate}
    \begin{cases}
    \dot{x_1} = \frac{1}{r_\alpha} x_1^2 + \frac{1}{r_\beta} x_2^2 - 2x_2 - r_v\\
    \dot{x_2} = \frac{1}{r_\alpha} x_1x_2  + \frac{1}{r_\beta}x_2x_3 - x_3 - \frac{\lambda}{r_\beta \sigma_W^2}f_cx_2\\
    \dot{x_3} = \frac{1}{r_\alpha} x_2^2 + \frac{1}{r_\beta} x_3^2 - \frac{2\lambda}{r_\beta \sigma_W^2} f_c x_3 - ( \frac{\lambda}{\sigma_W^2} - \frac{\lambda^2}{r_\beta \sigma_W^4})f_c^2 \\
    \dot{x_4} = -\frac{2}{r_\alpha} x_1 x_4 - \frac{2}{r_\alpha} x_2 x_5 + \sigma_B^2\\
    \dot{x_5} = (1-\frac{x_2}{r_\beta})x_4 + ( \frac{\lambda}{r_\beta \sigma_W^2}f_c - \frac{x_3}{r_\beta} - \frac{x_1}{r_\alpha})x_5 - \frac{x_2}{r_\alpha}x_6\\
    \dot{x_6} = 2(1 - \frac{x_2}{r_\beta})x_5 + 2( \frac{\lambda}{r_\beta \sigma_W^2} f_c - \frac{x_3}{r_\beta})x_6 + \sigma_W^2
    \end{cases},
\end{equation}
with given initial and terminal conditions 
\begin{equation}
\begin{aligned}
    &x_1(T) = t_v,           &\; &x_2(T) = 0,              &\; &x_3(T) = 0, \\
    &x_4(0) = \E V_0^2,      &\; &x_5(0) = \E V_0 Y_0,     &\; &x_6(0) = \E Y_0^2.
\end{aligned}
\end{equation}

We specify the Hamiltonian for this control problem:  
\begin{multline}
    H(x, f_c, \psi) = \sum_{i=1}^6\psi_i \dot{x_i} + \left(-\tfrac{x_2}{r_\beta}x_5 - \tfrac{x_3}{r_\beta}x_6\right)f_c \\
    + \left(\tfrac{\lambda}{r_\beta \sigma_W^2}-\tfrac{1}{2}\right)x_6f_c^2
    + \lambda_\REG (f_c-1)^2,
\end{multline}
where $\psi:[0,T]\to\R^6$ denotes dual variables.
By Pontryagin's maximum principle, the dual variables satisfy the following system of adjoint equations:
\begin{equation}\label{eqn:ODE_fbcostate}
    \begin{cases}
    \dot{\psi}_1 = -\psi_1\frac{2 x_{1}}{r_{\alpha}} - \psi_2\frac{x_{2}}{r_{\alpha}} + \psi_4\frac{2 x_{4}}{r_{\alpha}} + \psi_5\frac{x_{5}}{r_{\alpha}}\\
    \dot{\psi}_2 = \frac{f_cx_{5}}{r_{\beta}}- 2\psi_{1}(\frac{x_{2}}{r_{\beta}} - 1) -\psi_{2}(\frac{x_{1}}{r_{\alpha}} + \frac{x_{3}}{r_{\beta}}  - \frac{f_c\lambda}{r_{\beta}\sigma_{W}^{2}})  \\
    \quad \quad - \psi_3\frac{2 x_{2}}{r_{\alpha}}+ \psi_{4}\frac{2 x_{5}}{r_{\alpha}} + \psi_{5}\Big(\frac{x_{4}}{r_{\beta}} + \frac{x_{6}}{r_{\alpha}}\Big)  +\psi_{6}\frac{2 x_{5}}{r_{\beta}}\\
    \dot{\psi}_3=\frac{f_c x_{6}}{r_{\beta}}  -\psi_{2}(\frac{x_{2}}{r_{\beta}} - 1) - 2\psi_{3}(\frac{x_{3}}{r_{\beta}} - \frac{f_c\lambda}{r_{\beta}\,\sigma_{W}^{2}}) + \psi_{5}\frac{ x_{5}}{r_{\beta}} \\
    \quad\quad + \psi_{6}\frac{2 x_{6}}{r_{\beta}}\\
    \dot{\psi}_4= \psi_{4}\frac{2 x_{1}}{r_{\alpha}} + \psi_{5}(\frac{x_{2}}{r_{\beta}} - 1)\\
    \dot{\psi}_5= \frac{f_c x_{2}}{r_{\beta}}+\psi_{4}\frac{2 x_{2}}{r_{\alpha}} + 
    \psi_5(-\frac{\lambda}{r_\beta\sigma_W^2}f_c+\frac{x_3}{r_\beta} +\frac{x_1}{r_\alpha})\\
    \quad\quad+ 2\psi_{6}(\frac{x_{2}}{r_{\beta}} - 1)\\
    \dot{\psi}_6= \frac{f_c x_{3}}{r_{\beta}} -\frac{1}{2}\,f_c^{2}(\frac{2\lambda}{r_{\beta}\sigma_{W}^{2}}- 1)+\psi_{5}\frac{ x_{2}}{r_{\alpha}} + 2\psi_{6}(\frac{x_{3}}{r_{\beta}} - \frac{f_c \lambda}{r_{\beta} \sigma_{W}^{2}})
    \end{cases}
\end{equation}
with given terminal conditions \(\psi_i(T) = 0,\ \forall 1\leq i\leq 6\).
The control update follows from the minimizer of the Hamiltonian:
\begin{multline}
    \hat{f_c} = \tfrac{2 \lambda_\REG r_\beta \sigma_W^4 + \lambda \psi_2 \sigma_W^2 x_2 + 2\lambda \psi_3 \sigma_W^2 x_3}{2\lambda_\REG r_\beta \sigma_W^4 - 2\lambda \psi_3 r_\beta \sigma_W^2 + 2\lambda^2 \psi_3 - \left(r_\beta \sigma_W^4 - 2\lambda \sigma_W^2\right) x_6} \\
    + \tfrac{\left(\sigma_W^4 x_2 - \lambda \psi_5 \sigma_W^2\right) x_5 + \left(\sigma_W^4 x_3 - 2\lambda\psi_6 \sigma_W^2\right) x_6}{2\lambda_\REG r_\beta \sigma_W^4 - 2\lambda \psi_3 r_\beta \sigma_W^2 + 2\lambda^2 \psi_3 - \left(r_\beta \sigma_W^4 - 2\lambda \sigma_W^2\right) x_6}.
\end{multline}

For the logarithmic penalty, the state dynamics~\eqref{eqn:ODE_fbstate} and the adjoint equations~\eqref{eqn:ODE_fbcostate} remain the same, while the only difference lies in control update \(\hat{f_c} = \tfrac{-B + \sqrt{B^2 + 4A\lambda_{\REG}}}{2A}\), where
\begin{align}\label{eqn:dHdf_fbsweep}
    A &:= x_6 (\tfrac{2\lambda}{r_\beta \sigma_W^2} - 1) - 2(\tfrac{\lambda}{\sigma_W^2} - \tfrac{\lambda^2}{r_\beta \sigma_W^4})\psi_3,\\
    B &:= -\tfrac{x_2x_5 + x_3x_6}{r_\beta} + \tfrac{\lambda \psi_5 x_5 + 2\lambda \psi_6x_6 - \lambda \psi_2x_2 - 2 \lambda x_3 \psi_3}{r_\beta \sigma_W^2}.
\end{align}

\section{Hyperparameters for Numerical Results in Section~\ref{sec:results}}\label{app:hyper}

In Section~\ref{sec:results}, we discretize the time horizon \([0,T]\) into \(N_T\) subintervals of equal lengths \(h = T/N_T\), and denote the time discretization scheme by \(\Delta := \{kh:0\leq k\leq N_T,k\in\mathbb{N}\}\), as the collection of the endpoints of all subintervals.
Section~\ref{sec:IV-A} uses \(N_T = 1000\) while Section~\ref{sec:IV-B} uses \(N_T = 200\).
All the ODEs are numerically solved by using the explicit Runge-Kutta method of order 8 \cite{wanner1996solving}, except for the ODEs within the NN method, where gradients of neural network parameters need to be tracked.

In Section~\ref{sec:IV-B}, \(f_c\) is numerically maintained as a vector \(f_c(\Delta)\in\R^{N_T + 1}\) evaluated at all the endpoints within \(\Delta\).
For the FPI and FBS solvers, iterations are terminated when the first time \(\|f_c^{(i+1)}(\Delta) - f_c^{(i)}(\Delta)\|_2<10^{-3}\) is satisfied.

For the NN solver, we use a four-layer feedforward neural network with the hyperbolic tangent activation function, which has \(32\) hidden neurons within each of the hidden layer.
It is worth noting that, when the logarithmic penalty is adopted, \(f_c\) must be strictly positive, which motivates us to add an extra layer of component-wise exponential output activation function after the last affine layer.
For parameter updates of neural networks, we adopt the Adam optimizer with an initial learning rate \(\eta = 0.001\).
Altogether \(N_{\mathrm{epoch}} = 500\) training epochs are carried out.

\end{document}